\title[Remarks on foliations with a unique singular point]{Remarks on foliations on $\CP$ with a unique singular point}
\author{Claudia R. Alcántara}
\address[Claudia R. Alcántara]{Departamento de Matemáticas, Universidad de Guanajuato, Callejón Jalisco s/n, A.P. 402, C.P. 36000, Guanajuato, Gto., México}
\email{claudia@cimat.mx}
\thanks{First author partially supported by GIR ECSING of the University of Valladolid and CONACyT under Grant 284424.}
\author{Jorge Mozo-Fern\'{a}ndez}
\address[Jorge Mozo Fern\'{a}ndez]{Dpto. \'{A}lgebra, An\'{a}lisis Matem\'{a}tico, Geometr\'{\i}a y Topolog\'{\i}a \\
Facultad de Ciencias, Universidad de Valladolid \\
Campus Miguel Delibes\\
Paseo de Bel\'{e}n, 7\\
47011 Valladolid - Spain}
\email{jorge.mozo@uva.es}
\thanks{Second author partially supported by Ministerio de Ciencia e Innovación of Spain, under Project PID2019-105621GB-I00, of title \textit{Métodos asintóticos, algebraicos y geométricos en foliaciones singulares y sistemas dinámicos} (Javier Sanz Gil and Fernando Sanz Sánchez, coords.), by Ministerio de Ciencia e Innovación, under Project PID2022-139631NB-I00, \textit{Análisis Asintótico, Álgebra y Geometría en Sistemas Dinámicos} (Jorge Mozo Fernández and Fernando Sanz Sánchez, coords.), and by GIR ECSING of the University of Valladolid.}
 \newtheorem{theorem}{Theorem}[section]
\newtheorem{lem}[theorem]{Lemma}
\newtheorem{example}[theorem]{Example}
\newtheorem{defin}[theorem]{Definition}
\newtheorem{prop}[theorem]{Proposition}
\newtheorem{cor}[theorem]{Corollary}
\newtheorem{remark}[theorem]{Remark}
\newcommand{\Oc}{\mathcal{O}}
\newcommand{\CP}{\mathbb{CP}^2}
\newcommand{\C}{\mathbb{C}}
\newcommand{\Q}{\mathbb{Q}}
\newcommand{\N}{\mathbb{N}}
\newcommand{\F}{\mathcal{F}}
\newcommand{\pa}[1]{\frac{\partial}{\partial #1}}
\newcommand{\XX}{{\mathcal X}}
\newcommand{\norm}[1]{\left | #1 \right |} 
\DeclareMathOperator{\ord}{ord}
\DeclareMathOperator{\Sing}{Sing}
\begin{document}

\maketitle

\begin{abstract}
This paper is a contribution to the study of foliations on $\CP$ with a unique singularity. We provide an explicit example in degree 7 of such a foliation, in the non dicritical case, having a divergent separatrix, and subsequently, we study different families of such foliations in low degree. 
\end{abstract}

\section{Introduction}
In this paper, we are interested in the study of codimension one   holomorphic foliations over the complex projective plane $\CP$. 
Roughly speaking, a codimension one holomorphic foliation $F$ over a (complex) analytic or algebraic surface $M$ is a partition of $M$ in one dimensional Riemann surfaces (holomorphic curves) varying analytically: the leaves of the foliation. Foliations may be defined locally either by holomorphic 1-forms or by vector fields. In Section \ref{preliminares} we will precise this notion adapted to the projective plane, where a foliation may be defined by a homogeneous polynomial 1-form.

The 1-forms defining a foliation may vanish at a discrete number of points, called singularities of the foliation. For instance, foliations over $\CP$ always have sngularities, its number being bounded in terms of the degree of the foliation, which is a consequence of Proposition \ref{milnor}. The study of foliations may be carried out from two complementary points of view: locally and globally. Locally, people is interested, for instance, in knowing the properties of the foliation and its leaves in a neighbourhood around the singular points, and also, to state when two such foliations are equivalent from different points of view. One of the main ingredients used in the local study of codimension one holomorphic foliations over an ambient space of dimension two is the reduction of singularities, which can be found in \cite{vandenessen,mattei-moussu,mozo,cano-cerveau-deserti} and will be briefly recalled in Section \ref{preliminares}.

If approaching the study globally, people is interested in other problems such as the existence of leaves having particular properties, the sets of accumulation of these leaves, or the degree of the algebraic leaves (provided that they exist), when $M$ is an algebraic variety. 

Assume, in the sequel, that $F$ is a degree $d$ foliation (degree will be  defined in Section \ref{preliminares}) over the projective plane $\CP$. Some relevant problems concerning projective foliations are:
\renewcommand\theenumi{\arabic{enumi}}
\renewcommand\labelenumi{\theenumi.}
\begin{enumerate}
\item Existence (or not) of invariant algebraic curves.  It was J.-P. Jouanolou who gave in \cite{jouanolou} the first example of such a foliation without invariant algebraic curves. It is a foliation of degree $d$ with maximum number of singular points, and plenty of symmetries. Since then, other examples of such foliations have been given, and it is known that the set of foliations of degree $d$ without invariant algebraic curves contains a dense open set of the space of all foliations.
\item Bounding the degree of algebraic invariant curves, which has received attention from many authors. M. Carnicer shows in \cite{carnicer2} that, if all singularities are non dicritical, the maximum degree of an invariant curve of a foliation of degree $d$ is $d+2$. Another concise proof, using indices, can be read in \cite{brunella}. In the dicritical case, no such bound in terms of the degree exists.

\item The minimal set problem asks if there exists a foliation $F$ on $\CP$ of degree $d \geq 2$ with a non-trivial minimal set. This problem is equivalent to find a non-singular leaf that does not accumulate on any singular point.

\end{enumerate}

Foliations on $\CP$ have always a certain number of singularities, bounded   above by $d^2+d+1$.  A rather interesting problem is to study foliations which are  in the opposite situation as the one of Jouanolou's example, i.e.,  having only one singular point, of maximal Milnor number $d^2+d+1$ (the notion of Milnor number will be recalled in Section \ref{preliminares}). These foliations have been studied by C.R. Alcántara and P.R. Pantaleón in \cite{alcantara,alcantara-pantaleon},  and, in this Bulletin, by S.C. Coutinho and F.R. Ferreira \cite{CF,CF2}. In particular, foliations having one singular point with nonzero linear part are studied: in \cite{alcantara-pantaleon} it is shown that this singular point must be, either a saddle-node or a nilpotent singularity of the type called, following E. Str\'{o}\.{z}yna and H. \.{Z}o{\l}{\c{a}}dek \cite{SZ}, generalized saddle-nodes. In either case, no algebraic invariant curves appear.  Coutinho and Ferreira exhibit a family of foliations of degree $k+1$ having only one singular point of multiplicity $k$. Even if foliations without algebraic leaves seem to be the generic case, few explicit examples are known, and foliations with one singular point verifying particular conditions provide a source of such examples.

In this work, our aim is to deepen the study of foliations with only one singular point, establishing local properties of the singularity and the separatrices through it, coming from the global structure of the foliation. One of our objectives would be to study when a projective foliation with a unique singularity can have a divergent separatrix through it. We shall review the cases where the linear part of the foliation at the singular point is non zero, and subsequently, we shall study other cases. 

The structure of the paper is as follows: in Section 2 the main definitions about foliations, and properties are reviewed. In Section 3 
we describe how the Cremona transformations justify the interest of studying foliations with only one singular point, and will show that, in fact, in this case divergent separatrices may appear. We will provide an explicit example, of such a foliation, which is not a second type one. In fact, we will show that this kind of foliations, second type ones, form a family invariant by Cremona transformations. The case of unique singularity with non-zero linear part is studied in Section \ref{unica-singularidad}, providing in particular alternative proofs of the non algebraicity of the separatrices. These are foliations of second type, but when the degree is $d=2$, all foliations with one singular point are classified, and some of them are not of second type (while they don't exhibit divergent separatrices). Foliations with one singularity which are, either generalized curves or second type foliations, are studied in Section 5, and in Section 6 we will consider the case of foliations with rational first integral. Finally, in Section 7, some foliations of degree three are considered (unstable foliations, as presented in \cite{alcantara-ronzon}), showing that their separatrices are, again, convergent.

\section{Preliminaries} \label{preliminares}

The main object we are going to work with in this paper is holomorphic \mbox{foliations} of codimension 1 on $\CP$. This section will be devoted to recall the main definitions and properties we will use throughout the text. More details may be found in \cite{brunella,cano-cerveau-deserti,lins-scardua,gmont}. Homogeneous coordinates will be denoted $(x:y:z)$. Such a foliation $F$ may be described in two equivalent ways:
\begin{enumerate}
\item By a homogeneous vector field
$$
\XX= P(x,y,z)\pa{x}+ Q(x,y,x)\pa{y} + R(x,y,z)\pa{z},
$$
where $P$, $Q$, $R\in \C [x,y,z]$ are homogeneous polynomials of the same degree $d$, without common factors. This vector field describing the foliation is not unique: if $G(x,y,z)$ is a homogeneous polynomial of degree $d-1$ and $\mathcal{R}=x\pa{z}+y\pa{y}+z\pa{z}$ is the radial vector field, both $\XX$ and $\XX + G\cdot \mathcal{R}$ describe the same foliation and conversely, two vector fields representing the same foliation differ in a multiple of $\mathcal{R}$ up to multiplication by a nonzero scalar.

\item By a homogeneous 1-form $\omega=A(x,y,z)dx+ B(x,y,z) dy+ C(x,y,x) dz$, with $A$, $B$ and $C$ homogeneous of degree $d+1$ satisfying Euler's condition $xA+yB+zC=0$.
\end{enumerate}

Both expressions for a holomorphic foliation are related as follows. If $F$ is \mbox{described} by a vector field $\XX$ as above, the 1-form defining the foliation can be written as
$$
\begin{vmatrix}
dx & dy & dz \\ x & y & z \\ P & Q & R 
\end{vmatrix}= (yR-zQ)dx+(zP-xR)dy+(xQ-yP)dz.
$$
In this text, we will use the notation $F$ for an abstract foliation, $\XX$ for its corresponding meromorphic vector field, and $\omega$ for the corresponding 1-form. 
 \\
 
 Using previous considerations we see that the space of foliations of degree $d$ on $\CP$ is a projective space of dimension $d^2+4d+2$. Several notions, both local and global, about foliations, will be needed in this work, and will be recalled in this section. Given a holomorphic foliation $F$ on $\CP$ and an algebraic curve $\mathcal{C}$ defined by a homogeneous polynomial $f$, we will say that $\mathcal{C}$ is invariant if it is tangent to the foliation at every regular point. Algebraically, given a 1-form $\omega$ defining the foliation, invariance of $\mathcal{C}$ means that a 2-form exists such that $\omega \wedge df=f \eta$.

Given a non-invariant line $L$, the degree $d$ of the foliation may be computed as the number of tangencies (counted with multiplicities) of $L$ with the foliation. A point $p_0=(x_0: y_0: z_0)\in \CP$ is singular for $F$ if all coefficients (either of the vector field or the 1-form defining $F$) vanish at $p_0$. Assume that $p_0=(1:b :c )$ and restrict the foliation to the local chart $x=1$: $p(y,z)dy+q(y,z)dz$. The \textbf{Milnor number} of $F$ at $p_0$ is defined as $\mu_{p_0}(F):=\dim_{\C}\dfrac{\Oc_{\C^2,(b,c)}}{<p,q>}$ (this dimension is finite, as singularities are isolated), where, as usual, $\Oc_{\C^2,(b,c)}$ denotes the local ring of germs at $(b,c)$, 
and  \textbf{the multiplicity of $p_0$} as $m_{p_0}(F):=\min\{ \nu_{(b,c)}(p),\nu_{(b,c)}(q)\}$, where $\nu$ denotes the order at $(b,c)$ of a formal power series.  Note that we are considering $p$ and $q$ as elements of $\Oc_{\C^2,(b,c)}$ using an immersion of the polynomial ring into this local ring. These notions may be defined formally, using formal power series, and are invariant under formal changes of coordinates. 

The following well-known result relates to the Milnor number and the degree:

\begin{prop}[\cite{jouanolou}]  \label{milnor} Let $F$ be a foliation on $\CP$ of degree $d$ with isolated singularities. Then
\begin{equation*}
d^2+d+1=\sum_{p \in \CP} \mu_p(F).
\end{equation*}
\end{prop}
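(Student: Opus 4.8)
The plan is to realise the foliation globally as a holomorphic section of a rank-two bundle and to read the sum of Milnor numbers as its total number of zeros. Concretely, a degree $d$ foliation $F$ corresponds to a global holomorphic section $s$ of $T\CP\otimes\Oc(d-1)$: in each affine chart the vector field defining $F$ provides the local expression of $s$, and the transition functions of $\Oc(d-1)$ absorb exactly the ambiguity by a multiple of the radial field $\mathcal{R}$ and by a nonzero scalar noted after the two presentations of $F$. The zero locus of $s$ is then precisely $\Sing(F)$, which is finite since the singularities are isolated, and at each singular point the vanishing order of $s$ equals the local intersection multiplicity of its two coefficients, i.e. the Milnor number $\mu_p(F)$ as defined above. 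Granting this, $\sum_{p\in\CP}\mu_p(F)$ is the number of zeros, counted with multiplicity, of a holomorphic section of a rank-two holomorphic bundle over the compact complex surface $\CP$; by the Poincar\'e--Hopf theorem for sections of vector bundles (equivalently, by identifying the zero cycle with the top Chern class) this number is $c_2(T\CP\otimes\Oc(d-1))$ evaluated on the fundamental class.

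It then remains to compute this Chern number. Writing $H$ for the hyperplane class, one has $c(T\CP)=(1+H)^3$, so $c_1(T\CP)=3H$ and $c_2(T\CP)=3H^2$, while $c_1(\Oc(d-1))=(d-1)H$. Using the twisting formula $c_2(E\otimes L)=c_2(E)+c_1(E)c_1(L)+c_1(L)^2$ for a rank-two bundle $E$ and a line bundle $L$, and $H^2=1$ on $\CP$, I get
$$c_2(T\CP\otimes\Oc(d-1))=3+3(d-1)+(d-1)^2=d^2+d+1,$$
which is exactly the asserted total.

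The main point to justify carefully is local, namely that the topological index of the holomorphic section $s$ at an isolated zero coincides with the algebraically defined Milnor number $\dim_\C \Oc_{\C^2,(b,c)}/\langle p,q\rangle$. This is the standard fact that a holomorphic map germ $(\C^2,0)\to(\C^2,0)$ with an isolated zero has topological degree equal to the complex dimension of its local algebra; in particular every local index is non-negative, so there is no cancellation and the Chern-number count literally equals the sum of Milnor numbers. I would also have to check the bundle identification, i.e. that a degree $d$ foliation really is a section of $T\CP\otimes\Oc(d-1)$ and not of some other twist, since it is precisely the exponent $d-1$ that forces the outcome $d^2+d+1$.

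As a more elementary alternative avoiding bundles, one can argue by B\'ezout. After a generic linear change of coordinates the line at infinity $z=0$ is non-invariant and contains no singularity; in the chart $z=1$ the foliation is $a\,dx+b\,dy$ with $a=yR-Q$ and $b=P-xR$ of degree at most $d+1$, and the singularities are the affine common zeros of $a$ and $b$. Their top-degree parts are $y\bar R$ and $-x\bar R$ with $\bar R(x,y)=R(x,y,0)$, so the projectivised curves $\{a=0\}$ and $\{b=0\}$ meet on the line at infinity exactly along $\{\bar R=0\}$. B\'ezout gives $(d+1)^2$ intersections in total, and subtracting the spurious contribution $d$ at infinity (due to the common factor $\bar R$ that reflects the purely chart-dependent radial behaviour there) leaves $(d+1)^2-d=d^2+d+1$ genuine affine singularities. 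Here the delicate step, and the place I expect the real work to concentrate, is the exact evaluation of the intersection multiplicities at the points at infinity.
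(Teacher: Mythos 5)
The paper does not prove this proposition at all: it is stated as a quoted result from Jouanolou's book, so there is no internal argument to compare yours against, and your proof must be judged on its own merits. On those merits it is correct, and your main line is in fact the standard proof. The bundle identification you flag as needing care is settled by the twisted Euler sequence $0\to\Oc(d-1)\to\Oc(d)^{\oplus 3}\to T\CP\otimes\Oc(d-1)\to 0$: its global sections are exactly triples $(P,Q,R)$ of degree-$d$ homogeneous polynomials modulo $G\cdot\mathcal{R}$ with $\deg G=d-1$, which is precisely the paper's description of a degree-$d$ foliation, so the twist is indeed $d-1$. Your Chern computation $c_2(T\CP\otimes\Oc(d-1))=3+3(d-1)+(d-1)^2=d^2+d+1$ is right, and the local ingredient --- that the index of an isolated zero of a holomorphic section equals $\dim_\C\Oc_{\C^2,0}/(p,q)$ --- is the standard equality of topological degree and local algebra dimension for holomorphic map germs; as you note, this positivity is what makes the Chern-number count literally equal to the sum of Milnor numbers. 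Note also that the ideal generated by the components of the local vector field coincides with the ideal generated by the coefficients of the local $1$-form, so your index agrees with the paper's definition of $\mu_p(F)$.

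Your B\'ezout alternative also goes through, and the ``delicate step'' you isolate can be completed as follows. First, non-invariance of the line at infinity is equivalent to $\bar R\neq 0$, since $\omega|_{z=0}=\bar R\,(y\,dx-x\,dy)$; this is what guarantees that $a$ and $b$ have degree exactly $d+1$ and that their homogenizations $\tilde a=yR-zQ$, $\tilde b=zP-xR$ share no component. Now work at a root $(1:y_0:0)$ of $\bar R$ in the chart $x=1$ with coordinates $(y,z)$. The identity $\tilde a+y\tilde b=z\,(yP-Q)$ gives $(\tilde a,\tilde b)=(z\,(yP-Q),\,zP-R)$ as ideals; since the line at infinity carries no singular point, the third coefficient $yP-Q$ does not vanish at $(y_0,0)$ and is a unit there, so the local intersection number is $\dim_\C \C\{y,z\}/(z,R(1,y,z))$, i.e.\ the multiplicity of $y_0$ as a root of $\bar R(1,y)$. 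Summing over the zeros of the degree-$d$ binary form $\bar R$ (including, in the chart $y=1$, a possible root at $(0:1:0)$) gives exactly $d$, which justifies the subtraction $(d+1)^2-d=d^2+d+1$.
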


Assume now that $p=(1:0:0)$ is a singular point of multiplicity 1, and let $A$ be the matrix of the linear part of a vector field defining locally the foliation at $p$. Different cases will be under consideration:
\begin{enumerate}
\item If $A$ is nilpotent and non zero, $p$ will be called a \textbf{nilpotent singularity}.
\item If $A$ has a zero eigenvalue and a nonzero one, $p$ will be called a \textbf{saddle-node}.
\end{enumerate}

It is well known that a reduction of singularities exists for holomorphic foliations in dimension two. This means that, after a finite number of point blow-ups, a new foliation is obtained and its singular points are of one of the following two types:
\begin{enumerate}
\item Either saddle-nodes.
\item Or singularities with nonzero eigenvalues $\lambda_1$, $\lambda_2$ satisfying $\lambda_1/\lambda_2 \notin \Q_{>0}$ (hyperbolic singularities).
\end{enumerate}

For details about reduction of singularities, the reader is addressed to \cite{cano-cerveau-deserti, mattei-moussu, mozo}. If $\omega$ is a 1-form defining a foliation locally around a reduced singularity $p$ (which we assume again that it is the origin), there exist at most two smooth analytic curves (separatrices) through $p$. In the hyperbolic case, exactly two such separatrices exist, each of them tangent to a different eigendirection. In the saddle-node case, there always exists a separatrix tangent to the eigendirection of the nonzero eigenvalue (the strong separatrix). It may exist (or not) another one tangent to the eigendirection of the null eigenvalue (the weak separatrix). Anyhow, it always exists formally, while in general its convergence cannot be guaranteed.

Point blow-ups allow us to reduce singularities of foliations, but having as main disadvantage a change of the ambient space: we no longer live on $\CP$ but on a different surface. 

Related to this reduction process we have the following important notions, which will be used throughout the work.

\begin{defin} \cite{camacho-lins-sad} A foliation $F$ on $\CP$ is a \textbf{generalized curve} if no saddle-nodes appear in its reduction process. 
\end{defin}
\begin{defin} \cite{Mattei-Salem} A foliation $F$ on $\CP$ is of \textbf{second type} if, for every saddle-node in its reduction process, the weak separatrix is not contained in any component of the exceptional divisor.
\end{defin}

Note that if a foliation $F$ is a generalized curve then it is of second type.
Associated to a singular point of a foliation defined by the 1-form $\omega$, several indices may be defined which will be used throughout the text. We will give a very quick revision of them.

\begin{enumerate}
\item CS (Camacho-Sad) index. If $p_0$ is a singularity, and $f$ is the (reduced) equation of a separatrix $\mathcal{C}$ through $p_0$, there are relatively prime germs $g,h\in \C \{ y,z\}$ and a 2-form $\eta$ such that $g \omega =hdf+f\eta$. The Camacho-Sad index of $\mathcal{C} $ at $p_0$ is
\begin{equation} \label{CS-definition}
CS_{p_0} (F, \mathcal{C})=-\frac{1}{2\pi i} \int_{\gamma} \frac{\eta}{h} = Res_{p_0} \left( \left. -\frac{\eta}{h} \right|_{\mathcal{C}} \right),
\end{equation}
where $\gamma$ is a union of small circles around $p$, one for each irreducible component of $\mathcal{C}$. It is straightforward to compute it in some significant cases:
\begin{itemize}
\item If $p_0$ is a hyperbolic singularity, write the foliation in local coordinates as  $\omega= \lambda_1 z U_1 (y,z) dy + \lambda_2 y U_2 (y,z) dz$, $U_1 (0)= U_2 (0)=1$. If $\mathcal{C}$ is $z=0$, then $CS_{p_0} ( F , \mathcal{C})= -\dfrac{\lambda_1}{\lambda_2}$.
\item If $\omega$ is a saddle-node with a normal form $\omega_{k,\lambda}= z^{k+1} dy - y (1+\lambda z^k)dz$, $z=0$ is the strong separatrix and $y=0$ is the weak one. We have
\begin{equation*}
CS_{p_0} (F, z=0)=0;\qquad CS_{p_0} (F, y=0)=\lambda.
\end{equation*}
\end{itemize}

\item GSV (Gómez-Mont-Seade-Verjovsky) index. Given a germ of separatrix $\mathcal{C}$ through $p_0$, with reduced equation $f$, and the decomposition $g\omega =h df+f\eta$, GSV index of $\mathcal{C}$ at $p_0$ is defined as
\begin{equation} \label{GSV-definition}
GSV_{p_0} (F,\mathcal{C}) = \frac{1}{2\pi i} \int_\gamma \frac{g}{h} d \left( \frac{h}{g}\right).
\end{equation}

If it is smooth and $\omega=ady+bdz$, assuming that $p_0=(0,0)$, it can be easily checked that
\begin{equation} \label{GSV-smooth}
GSV_{p_0} (F, \mathcal{C}) = \dim_{\C} \frac{\Oc_{\C^2,(0,0)}}{(a,b,f)}.
\end{equation}
In particular, if $\mathcal{C}$ is $z=0$, and $\omega=za_1 dy+bdz$, $GSV_{p_0} (F, \mathcal{C})= \ord_{p_0} (b(y,0))$.
If $\mathcal{C}$ is either any of the separatrices through a hyperbolic singularity $p_0$ or the strong separatrix of a saddle-node, then $GSV_{p_0}(F, \mathcal{C})=1$. If $\mathcal{C}$ is the weak one (provided it exists), then $GSV_{p_0} (F, \mathcal{C})=k+1$.
\end{enumerate}

Both indices can be defined in a purely formal setting. In order to do it, it is enough to take a Puiseux parametrization $\sigma$ of a branch of the separatrix $\mathcal{C}$, and compute
$$
Res_{0}\left( -\sigma^{\ast}\left( -\frac{\eta}{h} \right) \right)
$$
in the case of Camacho-Sad index, or
$$
Res_{0} \left(\sigma^{\ast}\left(  \frac{g}{h} d \left( \frac{h}{g}\right) \right) \right)
$$
for GSV. Note also that
$$
\frac{\Oc_{\C^2,(0,0)}}{(a,b,f)} \cong \frac{\widehat{\Oc}_{\C^2,(0,0)}}{(a,b,f)} \cong \frac{\C [[y,z]]}{(a,b,f)},
$$
so the formula \eqref{GSV-smooth} is also valid formally.

The following result relates the degree of a foliation and the previously defined indices.

\begin{theorem} Let $F$ be a foliation on $\CP$ of degree $d$ with isolated singularities and suppose that $F$ has an invariant algebraic curve $\mathcal{C}$ of degree $t$. Then
\begin{equation} \label{CS}
\sum_{p \in \Sing (F ) \cap \mathcal{C}} CS_p(F,\mathcal{C}) =t^2 \end{equation}
and
\begin{equation} \label{GSV}
\sum_{p \in \Sing (F ) \cap \mathcal{C}} GSV_p(F,\mathcal{C})=(d+2)t-t^2.
\end{equation}
\end{theorem}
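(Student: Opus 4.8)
The plan is to recognize both identities as global index theorems for the invariant curve $\mathcal{C}$, each obtained by summing a local residue over the singular points of $F$ lying on $\mathcal{C}$ and then evaluating the total through a Chern-number computation on $\mathcal{C}$. Throughout I fix the homogeneous $1$-form $\omega = A\,dx+B\,dy+C\,dz$ of degree $d+1$ defining $F$ and the reduced homogeneous polynomial $f$ of degree $t$ defining $\mathcal{C}$; invariance means $\omega\wedge df = f\,\Theta$ for a suitable $2$-form $\Theta$, and locally around each point of $\mathcal{C}$ one has the decomposition $g\omega = h\,df + f\eta$ used in \eqref{CS-definition} and \eqref{GSV-definition}. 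The two intersection numbers I will need are the self-intersection $\mathcal{C}\cdot\mathcal{C}=t^2$ (B\'ezout, since $\mathcal{C}$ is a divisor of class $t$ times a line) and $N_F\cdot\mathcal{C}=(d+2)t$, where $N_F=\mathcal{O}(d+2)$ is the normal bundle of $F$ (from $T_F=\mathcal{O}(1-d)$ and $N_F=K_{\CP}^{\ast}\otimes T_F^{\ast}$).

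For the first identity I would argue that the local meromorphic $1$-forms $\left.-\eta/h\right|_{\mathcal{C}}$ patch, on overlaps, to a meromorphic connection on the normal bundle $N_{\mathcal{C}}=\mathcal{O}(t)|_{\mathcal{C}}$, holomorphic away from $\Sing(F)\cap\mathcal{C}$, whose residue at each $p$ is exactly $CS_p(F,\mathcal{C})$. The residue theorem for a meromorphic connection on a line bundle over a compact curve then gives $\sum_p CS_p(F,\mathcal{C})=\deg N_{\mathcal{C}}=\mathcal{C}\cdot\mathcal{C}=t^2$. At singular points of $\mathcal{C}$ this computation should be carried out on the normalization $\nu\colon\widetilde{\mathcal{C}}\to\mathcal{C}$, summing the residues $\mathrm{Res}_0\,\sigma^{\ast}(-\eta/h)$ over the branches, exactly as in the formal definition recalled in the excerpt.

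For the second identity I would run the same residue mechanism, but now on the line bundle $N_F|_{\mathcal{C}}\otimes N_{\mathcal{C}}^{-1}$, for which the GSV residue $\mathrm{Res}_0\,\sigma^{\ast}\!\left(\frac{g}{h}\,d(h/g)\right)$ at each branch is the local contribution. Summing and invoking the residue theorem yields $\sum_p GSV_p(F,\mathcal{C})=\deg(N_F|_{\mathcal{C}})-\deg N_{\mathcal{C}}=N_F\cdot\mathcal{C}-\mathcal{C}\cdot\mathcal{C}=(d+2)t-t^2$. Alternatively, one may deduce this from the first identity together with the smooth-point formula \eqref{GSV-smooth} and a Poincar\'e--Hopf count of tangencies along $\mathcal{C}$, which replaces the bundle computation by a more hands-on degree count.

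The main obstacle in both cases is the passage from the purely local residues to the global Chern number. One must check two things: first, that a different choice of $(g,h,\eta)$ at a given point does not alter the residue, so that each $CS_p$ and $GSV_p$ is intrinsic; and second, that on overlaps the local forms $\left.-\eta/h\right|_{\mathcal{C}}$ differ by the logarithmic derivative of a transition function of $N_{\mathcal{C}}$, so that they define a genuine meromorphic connection whose total residue equals $\deg N_{\mathcal{C}}$ rather than zero. Treating the singular points of $\mathcal{C}$ requires lifting everything to $\widetilde{\mathcal{C}}$ and reading the indices off the Puiseux parametrizations, as recalled in the excerpt. Once the residue theorem is in force, the remaining steps — computing $\deg N_{\mathcal{C}}=t^2$ and $\deg(N_F|_{\mathcal{C}})=(d+2)t$ — are immediate from B\'ezout and the identification $N_F=\mathcal{O}(d+2)$.
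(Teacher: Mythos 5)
The paper never proves this theorem: it is stated in the preliminaries as a recalled classical result (the Camacho--Sad and GSV index theorems), with the reader implicitly referred to the cited literature, notably \cite{brunella}, so there is no in-paper argument to compare yours against. What you have written is, in outline, exactly the standard proof from that literature: for \eqref{CS}, the foliation induces a meromorphic connection on $\mathcal{O}_{\CP}(\mathcal{C})|_{\mathcal{C}}$ (pulled back to the normalization when $\mathcal{C}$ is singular), holomorphic off $\Sing(F)\cap\mathcal{C}$, whose residues are the Camacho--Sad indices of \eqref{CS-definition}, and the residue theorem for connections gives the degree $t^2$; for \eqref{GSV}, the quotient $h/g$ from the local decompositions $g\omega=hdf+f\eta$ patches into a meromorphic section of $N_F|_{\mathcal{C}}\otimes N_{\mathcal{C}}^{\ast}$ whose local vanishing orders are the GSV indices of \eqref{GSV-definition}, so the sum is $\deg N_F|_{\mathcal{C}}-\deg N_{\mathcal{C}}=(d+2)t-t^2$, using $N_F=\mathcal{O}(d+2)$, which you derive correctly. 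The verifications you flag (independence of the choice of $(g,h,\eta)$, the cocycle computation showing the local data glue, and the branch-wise treatment via Puiseux parametrizations at singular points of $\mathcal{C}$ --- which are necessarily singular points of $F$) are indeed the genuine technical content, and all are standard; the only point where care is needed is the sign bookkeeping in the residue theorem for meromorphic connections, since with the convention $\nabla e=\theta\otimes e$ the total residue is $-\deg$ rather than $+\deg$, and the minus sign built into \eqref{CS-definition} is precisely what makes the sum come out to $+t^2$. With that caveat, your proposal is a correct reconstruction of the proof the paper delegates to its references.
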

\begin{enumerate}
\setcounter{enumi}{2}
\item BB (Baum-Bott) index. Assume that the foliation $F$ is given locally by the vector field $\XX= Q(y,z)\pa{y}+ R(y,z)\pa{z}$. The Baum-Bott index of $F $ at $p_0$ is
\begin{equation*}
BB (F, p_0)= Res_{p_0} \left( \frac{(tr J)^{2}}{Q\cdot R} dy\wedge dz \right),
\end{equation*}
where $J$ is the Jacobian matrix of $(Q,R)$ at $p$. If $p$ is hyperbolic with eigenvalues $\lambda_1$, $\lambda_2$, we have that
\begin{equation*}
BB (F, p_0)= 2+ \frac{\lambda_1}{\lambda_2} + \frac{\lambda_2}{\lambda_1}.
\end{equation*}
For $p$ a saddle-node, $BB(F,p)= 2p+2+\lambda$.

The following formula will be used throughout the text:
\begin{equation} \label{BB}
BB(F):=\sum_{p\in \Sing (F)} BB(F,p)= (d+2)^2,
\end{equation}
where, as before, $d$ denotes the degree of the foliation.

\end{enumerate}

Finally, if $F$ is a second type foliation at the singular point $p$, these three indices are related as follows \cite{fernandez-mol}:
\begin{equation} \label{formula-indices}
    BB(F,p)= CS_p(F,\mathcal{C}_p) + 2GSV_p (F,\mathcal{C}_p),
\end{equation}
where $\mathcal{C}_p$ denotes the set of separatrices (maybe formal) through $p$.

\section{Cremona Transformations}

Foliations over $\CP$ may be transformed using Cremona transformations: birational transformations from $\CP$ to itself. A Cremona transformation  $T$ of $\CP$ is well defined away from its indeterminacy set $\textrm{Ind}(T)\subseteq \CP$ (a finite number of points). The set where the Jacobian determinant of $T$ vanishes is the exceptional set of $T$, $\textrm{Exc} (T)$: it is a finite set of curves that collapse to points. Given a foliation $F$, $T^{\ast} F$ may be extended to define a new foliation on $\CP$. 

The standard Cremona transformation is the map defined by $\sigma (x:y:z)= (yz:xz:xy)$, with indeterminacy set $\{ (1:0:0), (0:1:0), (0:0:1)\}$, and exceptional set $xyz=0$. This map collapses three lines to points, and blows up three points to obtain new lines. In fact, it is classical that $\sigma$ is a composition of three blow-ups and three blow-downs. 

A Cremona transformation that will be useful for us is the map $\rho (x:y:z)= (xy:y^2 : x^2-yz)$, with indeterminacy set $\{ (0:0:1)\}$ and exceptional set $y=0$. If several singularities lie over the line $y=0$, and are different from the point $(0:0:1)$, $\rho$ collapses them to a single point. Thus, applying a sequence of these transformations and automorphisms of $\CP$, any foliation over $\CP$ may transformed into a foliation with only one singularity (see \cite{cerveau-deserti} for details). Many properties are preserved under birational maps (algebraicity of the leaves, divergence of the separatrices, etc), so this result supports the interest in studying foliations with only one singularity. As the main disadvantage, this singularity may have a big multiplicity and the new foliation obtained a big degree, as with every Cremona map, the degree may increase. In this work, we will concentrate on small multiplicities and degrees. 

As a remark, another approach is possible: according to \cite{carnicer}, a kind of reduced singularities may be obtained using Cremona transformations, increasing the number of singular points. We will not follow this approach (complementary to ours) in this work.

The following is an important result about the relationship between the desingularization process and the foliations obtained using Cremona transformations.

\begin{theorem} \label{cremona-second type}
Under a Cremona transformation, a second type foliation (resp. a generalized curve) is transformed into a second type foliation (resp. a generalized curve).
\end{theorem}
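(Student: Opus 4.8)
The plan is to reduce the global statement to a purely local one about how the reduction of singularities interacts with Cremona transformations, and then invoke the fact that every Cremona map factors into elementary blow-ups and blow-downs. The key observation is that both ``second type'' and ``generalized curve'' are conditions phrased entirely in terms of the singularities appearing in the reduction process (whether saddle-nodes occur, and if so whether their weak separatrix lies in the exceptional divisor). Since these are local analytic invariants, the strategy is to show they are preserved locally under the elementary transformations that compose $T$.

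First I would recall, as stated in Section \ref{preliminares} (and the remark opening Section 3), that the standard Cremona transformation $\sigma$ is a composition of three blow-ups followed by three blow-downs, and more generally any Cremona map $T$ factors as a finite sequence of point blow-ups and blow-downs connecting $\CP$ to itself. Thus it suffices to prove the claim for a single blow-up and a single blow-down. For a blow-up, this is essentially tautological: the reduction of singularities of $T^{\ast}F$ is obtained by continuing the reduction of $F$, so the collection of saddle-nodes (and the position of their weak separatrices relative to the divisor) appearing after reduction is \emph{the same} set of data — blowing up first and then reducing produces the same final reduced model as reducing directly, since the reduction of singularities is essentially unique (the minimal reduction dominates any other). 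Hence no new saddle-node is created and none of the relevant separatrix positions change.

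The substantive direction is the blow-down. Here I would argue that collapsing a divisor component that is \emph{not} itself produced as a separatrix-carrying component cannot manufacture a saddle-node in the reduction of the image foliation: if the image $T_{\ast}F$ had a saddle-node in its reduction that $F$ did not, then blowing that point back up would, by the uniqueness of reduction, force a saddle-node to already be present in the reduction of $F$, a contradiction. The same bookkeeping applies to the weak-separatrix condition: a weak separatrix landing inside a divisor component after the transformation would, upon undoing the blow-down, correspond to a weak separatrix inside a divisor component before it. The cleanest formulation is to treat the blow-down as the inverse of a blow-up and run the blow-up argument backwards, so that the two cases are genuinely one.

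The main obstacle I anticipate is the careful treatment of the indeterminacy points of $T$ and the exceptional curves that get contracted: a Cremona transformation is only birational, so one must make sure the reduction of singularities commutes with the transformation away from, and across, the indeterminacy locus. Concretely, the delicate point is verifying that when $\rho$ or $\sigma$ collapses several singular points lying on an exceptional line to a single point, the local reduction model at that collapsed point is built precisely from the reduction models of the original points, with no spurious saddle-node introduced by the contraction. I would handle this by passing to a common resolution: resolve both $F$ and $T^{\ast}F$ simultaneously on a surface dominating both $\CP$'s via the factorization of $T$ into blow-ups, and observe that on this common resolution the reduced foliation is literally the same object, so the saddle-node data read off on either side must coincide. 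This is where one must be most careful that ``second type'' and ``generalized curve'' are invariants of the reduced foliation germ and not of the ambient embedding.
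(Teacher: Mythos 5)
Your proposal follows essentially the same route as the paper's proof: factor the Cremona map through a common surface obtained by blow-ups, note that the second type and generalized curve conditions are preserved tautologically under blow-ups, and dispose of the blow-down direction by a contradiction argument based on the persistence of badly-placed saddle-nodes under further blow-ups. The paper instantiates this for the specific quadratic map $\rho$ (remarking that the argument is general) while you phrase it for an arbitrary factorization into elementary transformations, but the substance is the same, including the level of detail at the delicate step you yourself flag --- namely the bookkeeping of which curves count as components of the exceptional divisor on each side of the transformation.
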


\begin{proof}
Even if the argument is general, we will explicit it in the case of the Cremona map $\rho$, as defined before, which is enough for our purposes. The indeterminacy set is eliminated after three blow-ups of points, so we have an algebraic morphism $T=\rho \sigma_{1}\sigma_{2} \sigma_{3}: M\longrightarrow \CP$. These blow-ups generate three components of the exceptional divisor, $E_i$, $i=1,2,3$, with respective self-intersections $-2$, $-2$ and $-1$. Moreover, the strict transform $\tilde{L}$ of the exceptional line $L$ has self-intersection $-1$. 

This map $T$ is a composition of blow-ups and automorphisms (see \cite[Ch. IV.3]{shafarevich}, \cite{deserti} for details). In fact, $T$ can be written as $\tau_{3} \tau_{2} \tau_{1}$, where $\tau_3$ collapses $\tilde{L}$, $\tau_{2}$ collapses $\tau_3 (E_2)$, and $\tau_{1}$ collapses $\tau_2 \tau_1 (E_1)$. The image of $E_3$ is the exceptional set. 

Given a second type singular point, under a blow up the singular points appearing are again second type, by definition. So, if $F$ is a second type foliation, $T^{\ast} F$ also is. Consider now the transformed foliation of $F$ by $\rho$ ($F'=\rho^{\ast} F$). As $(\tau_{3} \tau_{2} \tau_{1})^{\ast} F' = T^{\ast} F$, the only possibility to have a saddle point in a bad position (either in a corner or in a regular point of the exceptional divisor, with weak separatrix contained in the divisor) would be in the first three blow-ups. But, if this happens, this situation doesn't disappear under further blow-ups, and this would lead to a contradiction.

A similar argument applies to generalized curves.
\end{proof}
We are going to use previous tools in order to explicit a foliation on $\CP$ with one single singularity and a divergent separatrix through it. Take 
the projectivization of Euler's foliation $y^2dz +(z-y)dy$, which has the equation $- (y^2 z+xy (z-y))dx + x^2 (z-y)dy + xy^2 dz $, singular points $(1:0:0)$, $(0:1:0)$, $(0:0:1)$, and invariant lines $y=0$, $x=0$. It turns out that, after blowing up $(0:0:1)$ a saddle-node appears with a weak separatrix contained in the divisor, so it is not a second type foliation.
The sequence of transformations to apply can be described as follows:
\begin{enumerate}
\item Make a change of variables $z\rightarrow z-x$, in order that the point $(0:0:1)$ is not singular.
\item The Cremona map $\rho_1 (x:y:z)= (xy:y^2:x^2-yz)$ blows up the point $(0:0:1)$ and collapses the line $y=0$.
\item The line $x=0$ is still invariant. Again, after the change of variables $y\rightarrow y-z$, $(0:0:1)$ is not singular.
\item The map $\rho_2 (x:y:z)$ blows up $(0:0:1)$ and collapses $x=0$.
\end{enumerate}

After this sequence, we get a degree 7 foliation with only one singularity of multiplicity 4 at $(0:0:1)$, which has Milnor number $57$. The analytic expression of this foliation, computed with Maple, is as follows:
\begin{align*}
\tilde{\omega} & = (-x^7y - x^7z + 4x^6y^2 + 4x^6yz + 2x^6 z^2 - 5x^5 y^3 - 4 x^5 y^2 z + x^5 y z^2 + x^4 y^4  \\ & - 4 x^4 y^3 z - 3 x^4 y^2 z^2 - 3 x^4 y z^3 - x^4 z^4 + 4 x^3 y^5 + 9 x^3 y^4 z + 12 x^3 y^3 z^2 + 5 x^3 y^2 z^3 \\ & - 6 x^2 y^6  - 15 x^2 y^5 z - 9 x^2 y^4 z^2 + 6 x y^7 + 7 x y^6 z - 2 y^8)dx \\ &  + (x^8 - 4 x^7 y - 2 x^7 z + 5 x^6 y^2 + 3 x^6 y z - x^5 y^3 + 2 x^5 y^2 z - 4 x^4 y^4 - 6 x^4 y^3 z \\ & - 6 x^4 y^2 z^2 - 2 x^4 y z^3 + 6 x^3 y^5 + 12 x^3 y^4 z + 6 x^3 y^3 z^2 - 6 x^2 y^6 - 6 x^2 y^5 z + 2 x y^7)dy \\ &  + (x^8 - 2 x^7 y - 2 x^7 z + x^6 y^2 - x^6 y z + 2 x^5 y^3 + 3 x^5 y^2 z \\ & + 3 x^5 y z^2 + x^5 z^3 - 3 x^4 y^4 - 6 x^4 y^3 z - 3 x^4 y^2 z^2 + 3 x^3 y^5 + 3 x^3 y^4 z - x^2 y^6)dz
\end{align*}
This foliation is, in fact, one element in a family of foliations with one singular point and a divergent separatrix through it. Note that there are algebraic invariant curves ($x=0$) is invariant and it is not a second type foliation.

This example may be generalized in different ways. Among the simplest ones, consider the saddle-node written in local coordinates as
$$
y^d dz+(Q_p (y,z) + z+ \varepsilon y)dy,
$$
where $d\geq 2$, $Q_d (y,z)$ is a homogeneous polynomial of degree $d$ and $\varepsilon \in  \{ 0,1\}$. Its homogenization is 
$$
x(Q_d (y,z) + x^{d-1} (z+\varepsilon y) )dy +xy^d dz -(yQ_d (y,z) + x^{d-1} y (z+\varepsilon y) + y^d z) dx,
$$
which has singularities at infinity in the points solution of
$$
yQ_d (y,z) + y^d z=0.
$$
In particular, $y=0$ is a solution (which corresponds to the point $(0:0:1)$ of multiplicity $n_0$. It turns out that if $n_0<d$, it is a dicritical singularity, while if $n_0=d$, a blow-up exhibits a saddle-node with weak separatrix contained in the divisor, so it is not a second type foliation. The same sequence of Cremona transformations as before collapses the origin $(1:0:0)$ and the singularities at infinity in one single singularity with a (generic) divergent separatrix through it.

The rest of the paper will study further properties of foliations with one singular point in low degree and having particular properties. In Sections \ref{unica-singularidad} and \ref{strata} foliations of degrees 2 and 3 will be investigated.

\section{Foliations with a unique singularity with non-zero linear part } \label{unica-singularidad}

In this section, we will study foliations on $\CP$ having only one singularity with non zero linear part. By Proposition \ref{milnor}, the Milnor number of this singularity must be $d^2+d+1$, so this singular point is, either nilpotent or a saddle-node. Let us study independently these two families of foliations.

\subsection{Nilpotent case}
Assume first that the singularity is of nilpotent type. It is well known that has locally a Takens normal form $d(y^2+z^n)+ z^p V(z)dy$, where $V(z)$ is a unit. Nilpotent singularities behave differently according to the following three cases:
\begin{enumerate}
    \item $2p>n$. \textit{Generalized cusp}. It is a generalized curve. It may have either two irreducible separatrices ($n$ even) or one ($n$ odd).
    \item $2p=n$. \textit{Generalized saddle}. Several cases may appear, according to several arithmetic conditions. The foliation may be a generalized curve, dicritical, or a Poincaré-Dulac singularity may appear after blow-ups (in which case it will not be a second type foliation).
    \item $2p<n$. \textit{Generalized saddle-node}. After $p$ blow-ups, two singularities appear away from the corners: one is resonant hyperbolic and another one is a saddle-node. It is then a second type foliation with one or two smooth analytic separatrices, tangent at the origin.
\end{enumerate}

A foliation on $\CP$ with an analytic singularity of nilpotent type is always a generalized saddle-node (\cite{alcantara}). In fact, in appropriate local coordinates, around the singularity the  foliation is formally equivalent to
\begin{equation*}
z^{d^2+d+1}U(z) \frac{\partial}{\partial y}+\left( y+\sum_{i=m}^{d^2+d} a_i z^i \right)\frac{\partial}{\partial z},
\end{equation*}
where $1<m\leq d$, $a_i \in \C$, $a_m\neq 0$, $U(0)\neq 0$. There are, consequently, two separatrices, through the singularities, having the following expansions in these coordinates:

\begin{align*}
\mathcal{C}_1:\ & y+\frac{U(0)}{(m-d^2-d-2)a_m} z^{d^2+d+2-m}+...\\
\mathcal{C}_2:\ & y+a_m z^m+...
\end{align*}

Note that $\mathcal{C}_2$ is the separatrix going through the saddle-node, so it is maybe formal.

The reduction of singularities of this foliation is as follows: after blowing-up points $m$ times, a chain of divisors $E_1,E_2,\ldots , E_m$ is obtained. Over the last one, three singularities appear: $p_0= E_m \cap E_{m-1}$, $p_1$ a resonant singularity with eigenvalues $-1$ and $m$, and $p_2$ a saddle-node. 

Recall the following result concerning the behavior of Milnor numbers after blowing-ups, which can be read in \cite{mattei-moussu} (with a mistake in the statement) and which goes back to van den Essen \cite{vandenessen}: take a singularity $p$ of a holomorphic foliation on $\C^{2}$, with multiplicity $\nu$ and Milnor number $\mu$. If the blow-up of $p$ is non dicritical, then 
$$
\mu= \nu^{2}-\nu -1 + \sum_{c\in E_p} \mu_c,
$$
where $E_p$ is the divisor obtained after blowing up $p$ and $\mu_c$ are the Milnor numbers at every point $c\in E_p$.

Applying this result to the sequence of blowing 
-ups needed to desingularize a generalized saddle-node, if $\mu$ is the Milnor number of the singularity before blow-ups, the saddle-node appearing at the end of the desingularization process has Milnor number $d^2+2$ (as resonant singularities have Milnor number 1). This means that, according to \cite{martinet-ramis-sn}, through it, there is a (possibly) divergent separatrix whose equation has Gevrey order $\dfrac{1}{d^2+1}$ (and it is in fact summable).

Regarding indices, the following corollary collects some results about them:
\begin{cor} \label{indices-nilpotentes}  Let $F$ be a foliation on $\CP$ of degree $d$ with a unique nilpotent singularity. Suppose that its singular local scheme is:

 $$I=\Big \langle y+\sum_{i=m}^{d^2+d} a_iz^i, z^{d^2+d+1} \Big\rangle \subset \mathcal{O}_{\C^2,0},$$ 
 
 \noindent where $a_m \neq 0$. Then $F$ has two irreducible separatrices $\mathcal{C}_1=\mathbb{V}(f_1)$ and $\mathcal{C}_2=\mathbb{V}(f_2)$, the second one maybe formal, and they satisfy:

\begin{enumerate}
\item $GSV_p(F,\mathcal{C}_1)=m$
\item $GSV_p(F,\mathcal{C}_2)=d^2+d+2-m$
\item $CS_p(F,\mathcal{C}_1)=0$
\item $CS_p(F,\mathcal{C}_2)=-d^2+2d+2m$
\item $(\mathcal{C}_1,\mathcal{C}_2)=m$
\item $f_1f_2 \in I\widehat{\mathcal{O}}_{\C^2,0}$ ($\tau_{f_1f_2}=2m-1$, where $\tau_f$ is the Tjurina number of the curve $f$ at $p$).
\item $f_i \notin I$ for $i=1,2$.
\end{enumerate}
\end{cor}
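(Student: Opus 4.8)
Throughout I would work in the completion $\widehat{\Oc}_{\C^2,0}=\C[[y,z]]$ and use the formal normal form recalled just above the statement: up to formal equivalence the foliation is $z^{N}U(z)\,\partial_y+(y+S(z))\,\partial_z$, where $N:=d^2+d+1$, $S(z)=\sum_{i=m}^{d^2+d}a_iz^i$ (so $\ord S=m$) and $U(0)\neq 0$; equivalently it is given by $\omega=R\,dy-Q\,dz$ with $R=y+S(z)$, $Q=z^{N}U(z)$, and the singular scheme is $I=\langle R,Q\rangle=\langle R,z^{N}\rangle$. Both separatrices are smooth graphs $\mathcal{C}_i=\{y=\psi_i(z)\}$, $f_i=y-\psi_i(z)$, and from the displayed expansions $\ord\psi_1=N+1-m$, $\ord\psi_2=m$; invariance of $\mathcal{C}_i$ reads $R(\psi_i,z)\,\psi_i'=Q(\psi_i,z)$. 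Put $r_i(z):=R(\psi_i(z),z)=\psi_i(z)+S(z)$. Then $\ord r_1=m$ (leading term $a_mz^m$, as $\ord\psi_1>m$), while invariance gives $\ord r_2=\ord\big(Q|_{\mathcal{C}_2}\big)-\ord\psi_2'=N-(m-1)=N+1-m$.

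Items (1), (2), (5) and (3) are then direct local computations. For (1)–(2) I would invoke the smooth GSV formula \eqref{GSV-smooth}, valid formally: since $f_i$ is smooth, $\widehat{\Oc}/(f_i)\cong\C[[z]]$ via $y\mapsto\psi_i(z)$, so $GSV_p(F,\mathcal{C}_i)=\dim_{\C}\C[[z]]/(r_i,z^{N})=\min(\ord r_i,N)$, giving $m$ and $N+1-m=d^2+d+2-m$ respectively. For (5), two smooth branches meet with multiplicity $(\mathcal{C}_1,\mathcal{C}_2)=\ord(\psi_1-\psi_2)=\ord\psi_2=m$, since $\ord\psi_2=m<N+1-m=\ord\psi_1$. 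For (3) I would use the definition \eqref{CS-definition}: substituting $dy=df_1+\psi_1'\,dz$ yields $\omega=R\,df_1+(R\psi_1'-Q)\,dz$ with $R\psi_1'-Q$ vanishing on $\mathcal{C}_1$, so $g=1$, $h=R$, and $\eta|_{\mathcal{C}_1}=\partial_y(R\psi_1'-Q)|_{\mathcal{C}_1}\,dz=\psi_1'\,dz$ (because $R_y=1$, $Q_y=0$). Hence $CS_p(F,\mathcal{C}_1)=-\operatorname{Res}_0\!\big(\psi_1'/r_1\,dz\big)$, and since $\ord(\psi_1'/r_1)=(N-m)-m=d^2+d+1-2m\ge 1$, the integrand is holomorphic and the residue vanishes.

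For (6) and (7) I would perform Euclidean division by $R$. Reduction modulo $R$ (i.e. $y\equiv -S(z)$) sends $f_i\mapsto -(S+\psi_i)=-r_i$ and $f_1f_2\mapsto r_1r_2$, of orders $m$, $N+1-m$ and $N+1$. As $\ord r_1,\ord r_2<N$, neither $r_i$ lies in $(z^{N})$, whence $f_i\notin I$, proving (7); while $\ord(r_1r_2)=N+1>N$ gives $r_1r_2\in(z^{N})$, so $f_1f_2\in\langle R,z^{N}\rangle=I$. For the Tjurina number I would use that $f_1f_2$ is a product of two smooth branches of contact $m$, hence formally equivalent to $y(y-z^{m})$; computing $\widehat{\Oc}/(f,f_y,f_z)$ in this model (eliminate $y=z^m/2$ via $f_y$) gives $\C[[z]]/(z^{2m-1})$, i.e. $\tau_{f_1f_2}=2m-1$.

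The one item that is not a direct local computation is (4), and this is the main obstacle: the residue defining $CS_p(F,\mathcal{C}_2)$ needs $\psi_2$ to order $N$, with no expected closed form; but the stated value depends only on $d,m$, which signals that it must come from global/index constraints rather than a residue. Since $F$ is a generalized saddle–node it is of second type, so \eqref{formula-indices} applies to the total separatrix $\mathcal{C}_p=\mathcal{C}_1\cup\mathcal{C}_2$, and as $p$ is the unique singularity, \eqref{BB} gives $BB(F,p)=(d+2)^2$. Using the intersection‑corrected branch additivity of the two indices (see \cite{brunella}),
\[
CS_p(F,\mathcal{C}_p)=CS_p(F,\mathcal{C}_1)+CS_p(F,\mathcal{C}_2)+2(\mathcal{C}_1,\mathcal{C}_2),\qquad GSV_p(F,\mathcal{C}_p)=GSV_p(F,\mathcal{C}_1)+GSV_p(F,\mathcal{C}_2)-2(\mathcal{C}_1,\mathcal{C}_2),
\]
I would first get $GSV_p(F,\mathcal{C}_p)=m+(d^2+d+2-m)-2m=d^2+d+2-2m$ from (1), (2), (5); then $CS_p(F,\mathcal{C}_p)=BB(F,p)-2GSV_p(F,\mathcal{C}_p)=(d+2)^2-2(d^2+d+2-2m)=-d^2+2d+4m$; and finally, inserting (3) and (5), $CS_p(F,\mathcal{C}_2)=(-d^2+2d+4m)-0-2m=-d^2+2d+2m$. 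Thus (4) is obtained not by a local residue but by combining the second‑type identity \eqref{formula-indices} with the global Baum–Bott total and the additivity of $CS$ and $GSV$ over the two branches.
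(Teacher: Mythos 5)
Your proposal is correct, and its load-bearing step is the same as the paper's: item (4) is obtained exactly as in the paper, by combining the second type identity \eqref{formula-indices} with the global Baum--Bott total \eqref{BB} (valid since $p$ is the unique singularity) and the branch-additivity of $CS$ and $GSV$ corrected by $2(\mathcal{C}_1\cdot\mathcal{C}_2)$; your arithmetic reproduces the paper's displayed equation verbatim. Items (1), (2), (5) also follow the paper's route (smooth-curve formula \eqref{GSV-smooth} plus the orders of the two graphs), the only cosmetic difference being that you compute in the formal normal form $z^{N}U\,\partial_y+(y+S(z))\,\partial_z$ while the paper uses the Takens form $d(y^2+z^n)+z^pV(z)dy$. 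Where you genuinely diverge is in (3), (6) and (7). For (3) the paper invokes the hyperbolic singularity appearing after the reduction of singularities, whereas you compute the residue in \eqref{CS-definition} directly and observe that $\psi_1'/r_1$ has no pole; this is more self-contained (no blow-up bookkeeping for how $CS$ transforms) and the order count $N-2m\geq d^2-d+1>0$ is right. For (6)--(7) the paper uses the G\'omez-Mont-type formula $GSV_p(F,\mathcal{C})=\dim\widehat{\Oc}_{\C^2,0}/\langle I,f\rangle-\tau_f$ and compares codimensions against $\dim\widehat{\Oc}_{\C^2,0}/I=d^2+d+1$, while you reduce modulo $R=y+S(z)$ to $\C[[z]]$ and read off membership in $I=\langle R,z^N\rangle$ from the orders of $r_1$, $r_2$ and $r_1r_2$; your argument is more elementary (it never needs that dimension formula, only the invariance relation $r_2\psi_2'=z^NU$ fixing $\ord r_2=N+1-m$), at the cost of being tied to the explicit normal form, whereas the paper's argument works directly from the index data (1), (2), (5) and so is more robust. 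Your verification of $\tau_{f_1f_2}=2m-1$ via the model $y(y-z^m)$ is a correct substitute for the paper's one-line deduction from smoothness and (5).
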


\begin{proof} (1) and (2) are easily computed from Takens normal form. Indeed, if $y=h_i (z)$ is the equation of each of the separatrices ($i=1,2$), we have that $h_1 (z)$ has order $n-p$ and $h_2 (z)$ has order $p$. As they are smooth, by \eqref{GSV-smooth} it is enough to compute
$$
\dim_{\mathbb C} \dfrac{\mathcal{O}_{\C^2,(0,0)}} {(2y+z^p V(x),z^{n-1},y-h_i (z))} = \min \{ \ord (2h(z)+z^p V(z), n-1\},
$$
which is $p$ in the case of $\mathcal{C}_1$ and $n-p$ for $\mathcal{C}_2$ (in this Corollary, $m=p$ and $d^2+d+2=n$). Also, (3) is easily computed as it is the index of a separatrix through a hyperbolic singularity, and eigenvalues are straighforward to be computed. Both separatrices agree, in Takens normal form, up to the term $z^p$ ($z^m$ with the notation of this Corollary), so (5) follows. Using a formal version of Baum-Bott's formula we have that
\begin{align*}
(d+2)^2&=BB_p(F)=CS_p(F,\mathcal{C}_1)+CS_p(F,\mathcal{C}_2)+\\
&2(\mathcal{C}_1,\mathcal{C}_2)+2GSV_p(F,\mathcal{C}_1)+2GSV_p(F,\mathcal{C}_2)-4(\mathcal{C}_1,\mathcal{C}_2),  
\end{align*}
so we obtain $CS_p(F,\mathcal{C}_2)=-d^2+2d+2m$. Both separatrices being smooth, from (5) we deduce that   $\tau_{f_1f_2}=2m-1$. Then 

\begin{align*}
 GSV_p(F,C)&=\dim \frac{\widehat{\mathcal{O}}_{\C^2,0}}{\langle I, f_1 \cdot f_2 \rangle}-\tau_{f_1f_2}\\ 
 &=d^2+d+2-2m,
\end{align*}

\noindent therefore $\dim \frac{\widehat{\mathcal{O}}_{\C^2,0}}{\langle I, f_1 \cdot f_2 \rangle}=d^2+d+1=\dim \frac{\widehat{\mathcal{O}}_{\C^2,0}}{I}$, and $f_1f_2 \in I\widehat{\mathcal{O}}_{\C^2,0}$ (6). Since each separatrix $f_j$ is smooth, then

$$GSV_p(F,\mathcal{C}_j)=\dim \frac{\widehat{\mathcal{O}}_{\C^2,0}}{\langle I, f_j \rangle}<d^2+d+1$$

\noindent for $j=1,2$, therefore $f_j \notin I$. As the singularity is of second type, this implies that it is (locally and formally) quasi-homogeneous \cite{mattei,Mattei-Salem}.
\end{proof}

The following is an explicit  example of a family of foliations of even degree, with only one singularity of nilpotent type:
\begin{theorem} \label{theorem2} Let $d$ be an odd number greater than $1$.  The foliations on $\CP$ given by the vector field:
 \begin{equation*}
\XX= \alpha y^d \frac{\partial}{\partial x}+(\beta x^{\frac{d-1}{2}}yz^{\frac{d-1}{2}}-\beta^2z^d) \frac{\partial}{\partial y}+(x^{d-1}y-\beta x^{\frac{d-1}{2}}z^{\frac{d+1}{2}}) \frac{\partial}{\partial z},
 \end{equation*}

\noindent  where $\alpha, \beta \in \C^*$ have degree $d$ and a nilpotent singularity in $(1:0:0)$ with Milnor number $d^2+d+1$. 

\end{theorem}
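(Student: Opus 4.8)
The plan is to work throughout with the homogeneous $1$-form rather than with the vector field. Writing $k=\frac{d-1}{2}$ (an integer, since $d$ is odd, which is exactly what makes the half-integer exponents legitimate), the components $P=\alpha y^d$, $Q=\beta x^k yz^k-\beta^2 z^d$, $R=x^{2k}y-\beta x^k z^{k+1}$ are homogeneous of degree $d$, so the associated $1$-form $\omega=A\,dx+B\,dy+C\,dz$ with
$$A=yR-zQ,\qquad B=zP-xR,\qquad C=xQ-yP$$
has coefficients homogeneous of degree $d+1$ and automatically satisfies Euler's condition. The first thing I would record is the algebraic simplification obtained by expanding $A$: the cross terms combine into a perfect square,
$$A=x^{2k}y^2-2\beta x^k yz^{k+1}+\beta^2 z^{2k+2}=\bigl(x^k y-\beta z^{k+1}\bigr)^2 .$$
This factorization is what makes the whole singular-locus analysis tractable.

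Next I would compute the singular set $\Sing(F)=\{A=B=C=0\}$. Since any common zero must satisfy $A=0$, i.e.\ $x^k y=\beta z^{k+1}$, I would split into two cases. On the chart $x=1$ the relation reads $y=\beta z^{k+1}$; substituting into $B(1,y,z)=\alpha y^d z-y+\beta z^{k+1}$ the two terms linear in $y$ cancel and one is left with $\alpha\beta^d z^{d(k+1)+1}$, which forces $z=0$ and hence $y=0$, so the only singularity with $x\neq0$ is $(1:0:0)$. On the complementary locus $x=0$ the square gives $\beta^2 z^{2k+2}=0$, so $z=0$ and the only candidate is $(0:1:0)$, which is discarded because $C(0,1,0)=-\alpha\neq 0$. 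Hence $(1:0:0)$ is the unique singular point. Because the common zero locus of $A,B,C$ is then a single (in particular finite) set, these three polynomials can share no non-constant common factor; consequently $\omega$ has degree $d+1$ coefficients with trivial gcd, and $F$ is a foliation of degree exactly $d$ with isolated singularities.

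It then remains to identify the nature of the singularity and its Milnor number. In the chart $x=1$ the foliation is given by $p\,dy+q\,dz$ with $p=-y+\beta z^{k+1}+\alpha y^d z$ and $q=\beta yz^k-\beta^2 z^d-\alpha y^{d+1}$. Since $k\geq 1$ and $d\geq 3$, the coefficient $q$ has order $\geq 2$ at the origin while $p=-y+(\text{order}\geq2)$, so a tangent vector field $\XX_{\mathrm{loc}}=q\,\pa{y}-p\,\pa{z}$ has linear part with matrix $\left(\begin{smallmatrix}0&0\\1&0\end{smallmatrix}\right)$, which is non-zero and nilpotent; thus $(1:0:0)$ is a nilpotent singularity (of multiplicity $1$). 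Finally, since $(1:0:0)$ is the \emph{only} singular point of a degree $d$ foliation with isolated singularities, Proposition \ref{milnor} gives directly $\mu_{(1:0:0)}(F)=\sum_{p}\mu_p(F)=d^2+d+1$.

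I expect the main obstacle to be precisely the singular-locus computation, and more specifically making sure that no spurious singularity hides at infinity (the chart $x=0$): the perfect-square factorization of $A$ is the device that reduces this to two short substitutions. Note that the detour through Proposition \ref{milnor} is what keeps the argument clean: computing $\mu_{(1:0:0)}$ directly---e.g.\ by Weierstrass-eliminating $y$ through $p=0$ and taking the $z$-order of the resulting series---is possible but delicate, since the leading terms cancel repeatedly (a check for $d=3$ only reveals the surviving term $-\alpha^2\beta^6 z^{13}$ after expanding the solution $y=\phi(z)$ of $p=0$ up to order $z^{12}$), so I would avoid that route.
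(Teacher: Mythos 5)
Your proof is correct, and there is in fact nothing in the paper to compare it against: Theorem \ref{theorem2} is stated there without proof (the verification is left implicit, the text moving directly on to the value $m=\frac{d+1}{2}$ and the separatrix), so your write-up supplies exactly the missing computation. All the steps check out. With $k=\frac{d-1}{2}$, the coefficient $A=yR-zQ$ does collapse to the perfect square $(x^k y-\beta z^{k+1})^2$; on the chart $x=1$, substituting $y=\beta z^{k+1}$ into $B(1,y,z)=\alpha y^d z-y+\beta z^{k+1}$ indeed leaves only $\alpha\beta^d z^{d(k+1)+1}$, forcing $z=0$; at infinity, $A(0,y,z)=\beta^2 z^{2k+2}$ (valid because $k\geq 1$, i.e.\ $d\geq 3$) and $C(0,1,0)=-\alpha\neq 0$ eliminate the only remaining candidate $(0:1:0)$. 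Your observation that finiteness of the common zero locus forces $\gcd(A,B,C)=1$ is the right way to certify that the foliation has degree exactly $d$ (a degree drop could only come from a common factor of the $1$-form's coefficients, not of $P,Q,R$), and it is this degree statement that licenses the final appeal to Proposition \ref{milnor}. The local analysis is also correct: $p=-y+\beta z^{k+1}+\alpha y^d z$ has order $1$ and $q=\beta yz^k-\beta^2 z^d-\alpha y^{d+1}$ has order $k+1\geq 2$, so the multiplicity is $1$ and the dual vector field has linear part $y\,\pa{z}$, nilpotent and nonzero, which is precisely the paper's definition of a nilpotent singularity. Finally, deducing $\mu_{(1:0:0)}=d^2+d+1$ from Jouanolou's formula rather than by elimination is the right economy; as your $d=3$ check illustrates (the first surviving term being $-\alpha^2\beta^6 z^{13}$, of order $13=3^2+3+1$), the direct computation hides long cancellations, whereas uniqueness of the singular point plus Proposition \ref{milnor} gives the Milnor number for free.
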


For the family of Theorem \ref{theorem2}, $m= \dfrac{d+1}{2}$ and the formal separatrix turns out to be of the form $y=h (z)$, where $h(z)=z^{\frac{d+1}{2}} \tilde{h} (z)$, with $\tilde{h}(0)= \beta$. In fact, it is not hard to check that $\tilde{h} (z) = W(t)$, where $z^{\frac{d^2+1}{2}}=t$ and $W(t)$ satisfies the differential equation
$$
(W(t)-\beta -\alpha tW(t)^{d}) \cdot \left[ \frac{d^2+1}{2} tW'(t) + \frac{d-1}{2} W(t) \right] + (W(t) - \beta)^{2}=0.
$$

From previous considerations, the formal power series defining $W(t)$ must be of Gevrey order $\dfrac{d^2+1}{2}\cdot \dfrac{1}{d^2+1}=\dfrac{1}{2}$. Numerical experiments performed with the first 50 terms of the expression of $W(t)$ tend to support the hypotheses that $W(t)$ is, in fact, divergent for every value of $\alpha$, $\beta \neq 0$. It would be good, for $d=3$, to find an explicit example of a foliation of this type with a non-convergent separatrix, but up to now, we don't know it. It would be the first example of a second type foliation with only one singular point, and a divergent separatrix. It will be the subject of further investigations.

Remark that these foliations have no algebraic separatrices. For the sake of completeness, let us provide here a proof. Denote $\mathcal{C}$ a possible invariant algebraic curve of degree $t$. 
If $\mathcal{C}$ is not smooth ($t\geq 3$), we have by \eqref{CS}, that
$$
t^2= (-d^2+2d+2m)+2m=-d^2+2d+4m\leq -d^2+6d,
$$
so $t^2+d^2\leq 6d$. The only possible solution is $t=3$ and $d=3$. But a cubic cannot have a singular point with two branches of contact number $m\geq 2$. So, $\mathcal{C}$ must be smooth, and the only possibility is that it coincides with the separatrix which goes to the saddle-node appearing after the reduction of singularities (i.e. with $\mathcal{C}_2$ of Corollary \ref{indices-nilpotentes}). We have, by \eqref{GSV},
$$
d^2+d+2-m=t(d+2-t)\leq \left( \frac{d+2}{2} \right)^{2}.
$$
As $m<\dfrac{d^2+d+2}{2}$, then $\dfrac{d^2+d+2}{2}<\left( \dfrac{d+2}{2} \right)^2$, so $d^2-2d<0$, and $d=1$, which is not the case.

\subsection{Saddle-node case}

Let us comment now on the saddle-node case. A saddle-node singularity with Milnor number $k+1$ has a Dulac formal normal form 
$$
\omega = z^{k+1}dy - y (1+\lambda z^{k})dz
$$
(see \cite{mattei-moussu} for a proof). It has an analytic separatrix $\mathcal{C}_1$ ($z=0$ in previous expression) and a maybe formal one, $\mathcal{C}_2$ ($y=0$). It is straightforward to compute the indices in this case, and in fact we have $CS_p(F,\mathcal{C}_1)=0$, $CS_p(F,\mathcal{C}_2)=\lambda$, $GSV_p(F,\mathcal{C}_1)=1$, $GSV_p(F,\mathcal{C}_2)=k+1$.

Assume that $F$ is a degree $d$ foliation with only one singularity of saddle-node type, and consider the previous normal form. By Baum-Bott's formula,
\begin{align*}
BB_p(F)&=(d+2)^2=CS_p(F)+2 GSV_p(F)\\
&=CS_p(F,C_1)+CS_p(F,C_2)+2(C_1 \cdot C_2)_p+\\
&2 \big(GSV_p(F,C_1)+GSV_p(F,C_2)-2(C_1 \cdot C_2)_p \big)\\
&=\lambda +2 +2(1+d^2+d+1-2)=\lambda+2(d^2+d+1).
\end{align*}
So, $\lambda=-d^2+2d+2$. 
 
As in the nilpotent case, no algebraic separatrices can appear (see for instance \cite{alcantara-pantaleon} for a simple proof). As before, let us provide alternative proof of this fact. If the degree $t$ separatrix $\mathcal{C}$ is smooth, by Camacho-Sad index Theorem \eqref{CS} we must have $t^2=0$ or $t^2= -d^2+2d+2$, which is impossible. In the non smooth case, this formula gives the equality
$$
t^2= -d^2+2d+2 +2\cdot 1 = -d^2+2d+4,
$$
so $(d-1)^2=5-t^2$. The solutions $(d,t)$ of this equation  are $(3,1)$ and $(2,2)$. In the first case, $\mathcal{C}$ is a straight line, which is excluded. In the second case, $\mathcal{C}$ is a singular conic, so there would be invariant lines tangent to the foliation.

\begin{remark}
As in the nilpotent case, it is a problem to find an explicit example of a projective foliation with only a singular point, being a saddle-node, and with a divergent weak separatrix. To approach this result, we shall study in the rest of the paper foliations on $\CP$ of low degree.
\end{remark}

\begin{example} Up to change of coordinates, there are only four foliations on $\CP$ of degree 2 with a unique singular point (see \cite{cerveau}). For one of these, the multiplicity of the singular point is 1. There is another one with a rational first integral of degree $3$, this means that the singular point is dicritical. The remaining two are defined by the following vector fields:

\begin{align*}
\XX_2&=-y^2 \frac{\partial}{\partial x}+(zy-z^2) \frac{\partial}{\partial y}+z^2\frac{\partial}{\partial z}\\
\XX_3&=(y^2+z^2) \frac{\partial}{\partial x}+yz \frac{\partial}{\partial y}
\end{align*}
Regarding $\XX_2$, blowing up twice 
the local 1-form $\omega_2=z^2dz+(z+y^2)(ydz-zdy)$ defining the foliation, we obtain  the 1-form:
$$(-y^2+(2z-1)y)dz+z^2dy,$$

\noindent where we have two components of the exceptional divisor and one saddle-node singularity in the intersection of these components, the weak separatrix is tangent to this divisor. Therefore, it is not a second type foliation.
\\

For $\XX_3$ we have the local 1-form:

$$\omega_3=yzdz+(y^2+z^2)(zdy-ydz).$$

\noindent We must do 4 blow-ups to obtain a saddle-node singularity in a corner. Then $\XX_3$ is neither a second type.
\end{example}

\begin{prop} The weak separatrix of a foliation $F$ on $\CP$ of degree $d > 1$ with a unique singular point of saddle-node type is contained in the local formal singular scheme of $F$ which has the form:

$$I \widehat{\mathcal{O}}_0=\Big\langle y+\sum_{i=2}^{d^2+d} a_iz^i, z^{d^2+d+1} \Big\rangle \subset \widehat{\mathcal{O}}_0$$
\end{prop}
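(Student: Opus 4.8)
The plan is to reduce everything to the formal Dulac normal form of the saddle-node and then transport the singular scheme back to coordinates adapted to the separatrices.

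First I would fix the invariants. Since $F$ has $p$ as its only singular point, Proposition \ref{milnor} gives $\mu_p(F)=d^2+d+1$; writing $k:=d^2+d$, the saddle-node has Milnor number $k+1$. Next I would choose convergent local coordinates $(y,z)$ centred at $p$ in which the linear part is diagonalized, with the nonzero eigenvalue along $\partial/\partial y$ and the zero eigenvalue along $\partial/\partial z$; after straightening the (analytic, smooth) strong separatrix $\mathcal{C}_1$ I may further assume $\mathcal{C}_1=\{z=0\}$. Since the weak separatrix $\mathcal{C}_2$ is tangent to the zero-eigendirection, it is a (possibly only formal) smooth curve of the form $\{y=h(z)\}$ with $\ord_z h\geq 2$; write $h(z)=-\sum_{i\geq 2}a_i z^i$.

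The core of the argument is the formal normalization. By the formal classification of saddle-nodes recalled above, following \cite{mattei-moussu}, there is a formal diffeomorphism $\Phi$ fixing $p$ and conjugating $F$ to the foliation defined by $\omega_0=v^{k+1}du-u(1+\lambda v^k)dv$; as a conjugacy carries separatrices to separatrices and the strong one to the strong one, necessarily $\Phi(\mathcal{C}_1)=\{v=0\}$ and $\Phi(\mathcal{C}_2)=\{u=0\}$. The key point is that the local singular scheme is intrinsic: if $\omega=a\,dy+b\,dz$ defines $F$ and $\Phi^{\ast}\omega_0=U\omega$ with $U$ a unit, then $\langle a,b\rangle$ equals the ideal generated by the coefficients of $\Phi^{\ast}\omega_0$, and since the Jacobian of $\Phi$ is invertible this ideal is $\langle (v^{k+1})\circ\Phi,\ u\circ\Phi\rangle$ (the factors $1+\lambda v^k$ and $U$ being units). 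Now $v\circ\Phi$ vanishes exactly on $\Phi^{-1}\{v=0\}=\mathcal{C}_1=\{z=0\}$, so $v\circ\Phi=z\,w_1$ with $w_1$ a unit, and likewise $u\circ\Phi=(y-h(z))\,w_2$ with $w_2$ a unit. Absorbing the units gives
\[
\langle a,b\rangle=\big\langle z^{k+1},\ y-h(z)\big\rangle .
\]
Finally, since $h(z)-h_{\leq k}(z)\in(z^{k+1})$, replacing $h$ by its truncation does not change the ideal, so with $k=d^2+d$ I obtain the asserted form $\big\langle y+\sum_{i=2}^{d^2+d}a_i z^i,\ z^{d^2+d+1}\big\rangle$. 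The generator $y+\sum_{i=2}^{d^2+d}a_i z^i$ is precisely the $(d^2+d)$-jet of the equation of the weak separatrix $\mathcal{C}_2$, which is the exact sense in which $\mathcal{C}_2$ is contained in the scheme; moreover $\dim_{\C}\widehat{\mathcal{O}}_0/\langle z^{k+1},y-h\rangle=k+1$ recovers the Milnor number, confirming that the exponent must be $d^2+d+1$.

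I expect the main obstacle to be the bookkeeping in the normalization step: justifying that $\Phi$ carries the two separatrices onto the coordinate axes $\{v=0\}$ and $\{u=0\}$, and that the singular scheme transforms functorially so that the units coming both from the conjugacy factor $U$ and from the Jacobian of $\Phi$ can all be absorbed without altering the ideal. Everything else — the diagonalization, the straightening of $\mathcal{C}_1$, the order estimate $\ord_z h\geq 2$, and the truncation modulo $z^{k+1}$ — is routine.
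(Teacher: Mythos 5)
Your proof is correct, but it follows a genuinely different route from the paper's. The paper's argument is index-theoretic: it uses the value $GSV_p(F,\mathcal{C}_2)=d^2+d+1$ (computed earlier in that section from the Dulac normal form), the identity $GSV_p(F,\mathcal{C})=\dim_{\C}\widehat{\mathcal{O}}_0/\langle I,f\rangle-\tau_f$ together with $\tau_f=0$ for the smooth curve $f$, and then a dimension count: since $I\subseteq\langle I,f\rangle$ and both ideals have the same finite codimension $d^2+d+1$ (the Milnor number), they coincide, so $f\in I$. Notably, the paper's proof does not re-derive the explicit shape of $I$; it is taken as known. Your proof instead transports the singular scheme through the formal conjugacy $\Phi$ with the Dulac normal form, using invertibility of the Jacobian and absorption of units to get $\langle a,b\rangle=\langle z^{k+1},\,y-h(z)\rangle$ directly, from which both the asserted form of the ideal (after truncating $h$ modulo $z^{k+1}$) and the containment $y-h\in I$ follow at once. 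What each approach buys: the paper's is shorter given the index machinery already set up, and it runs parallel to the treatment of the nilpotent case (Corollary \ref{indices-nilpotentes}); yours is more elementary and self-contained (no indices, only the formal classification of saddle-nodes plus the functoriality of the singular scheme, which you correctly identify and justify as the key step), and it proves strictly more, since it establishes the stated form of $I\widehat{\mathcal{O}}_0$ rather than assuming it. One small point of phrasing: the sentence claiming the generator is "the $(d^2+d)$-jet of the equation of $\mathcal{C}_2$, which is the exact sense in which $\mathcal{C}_2$ is contained in the scheme" should be stated as the clean conclusion $y-h=(y-h_{\leq d^2+d})-(h-h_{\leq d^2+d})\in I$, which your truncation step already gives.
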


\begin{proof} Let $f(y,z)$ be the formal local equation of the weak separatrix $\mathcal{C}$ of $F$ at the singularity $p$. We know that:

$$GSV_p(F,\mathcal{C})=\dim_{\C} \frac{\widehat{\mathcal{O}}_0}{\langle I,f \rangle}- \tau_f=d^2+d+1= \dim_{\C} \frac{\widehat{\mathcal{O}}_0}{I},$$

\noindent as $f$ is smooth (and consequently $\tau_f=0$). Therefore, $f\in I$. 
\end{proof}

\section{Generalized curves and second type foliations}

This section is focused on studying and classifying generalized curves and second type foliations on $\CP$ with a unique singular point. We begin with the following lemma.

\begin{lem} \label{logarithmic} Let $F$ be a foliation on $\CP$ of degree $d$ with a unique non-dicritical singular point. If $F$ has an algebraic invariant curve $\mathcal{C}$ which contains all the separatrices, then the following statements are equivalent:
\begin{enumerate}
\item $F$ is logarithmic,
\item $F$ is a generalized curve,
\item $F$ is a foliation of the second type.
\end{enumerate}
\end{lem}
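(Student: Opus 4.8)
The three statements are most economically arranged as a cycle $(1)\Rightarrow(2)\Rightarrow(3)\Rightarrow(1)$. The step $(2)\Rightarrow(3)$ is exactly the remark recorded after the definitions in Section \ref{preliminares}: a generalized curve produces no saddle-nodes in its reduction, hence is trivially of second type. For $(1)\Rightarrow(2)$ I would use that a logarithmic foliation is defined by a \emph{closed} meromorphic $1$-form with simple poles $\eta=\sum_i\lambda_i\,df_i/f_i$; since closedness and the simple-pole structure are preserved under blow-up, the reduced singularities appearing in its desingularization are all of diagonal (non-degenerate) type, so no saddle-node occurs and $F$ is a generalized curve. Thus the whole content of the lemma is the implication $(3)\Rightarrow(1)$, which I treat next.

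Assume then that $F$ is of second type, and write $\mathcal{C}=\mathbb{V}(f)$ for the reduced curve of degree $t$ containing all separatrices. Because $p$ is the \emph{unique} singular point and every invariant algebraic curve must meet $\Sing(F)$, each component of $\mathcal{C}$ passes through $p$ and is therefore a separatrix; hence the germ of $\mathcal{C}$ at $p$ is precisely the separatrix set $\mathcal{C}_p$, and every global index sum collapses to a single local term. Combining Camacho--Sad \eqref{CS}, GSV \eqref{GSV} and Baum--Bott \eqref{BB} with the second-type relation \eqref{formula-indices},
\begin{align*}
(d+2)^2 = BB(F,p) &= CS_p(F,\mathcal{C}) + 2\,GSV_p(F,\mathcal{C}) \\
&= t^2 + 2\big((d+2)t - t^2\big) = 2(d+2)t - t^2 ,
\end{align*}
so that $\big((d+2)-t\big)^2=0$. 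Hence $t=d+2$, the maximal degree permitted by Carnicer's bound for a non-dicritical foliation, and moreover $GSV_p(F,\mathcal{C})=(d+2)t-t^2=0$.

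With the degree pinned down I would produce the logarithmic form directly. Since $\omega$ has coefficients of degree $d+1$ and $\deg f=d+2$, the rational $1$-form $\eta:=\omega/f$ has weight $-1$, the correct degree for a projective logarithmic form; Euler's condition $\omega(\mathcal{R})=0$ gives $\eta(\mathcal{R})=0$, which is exactly the residue relation $\sum_i\lambda_i\deg f_i=0$ that a logarithmic form on $\CP$ must satisfy. Because $F$ is non-dicritical and the coefficients of $\omega$ share no common factor, $\eta$ has only \emph{simple} poles along each component of $\mathcal{C}$. It then remains to show that $\eta$ is \emph{closed}: once $d\eta=0$, the form is genuinely logarithmic, its residues along the $\mathcal{C}_i$ are the constants $\lambda_i$, and $F$ is the foliation defined by $\eta$, which closes the cycle back through $(1)\Rightarrow(2)$.

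The closedness of $\eta$ is the heart of the matter and the step I expect to be hardest. My plan is to read $d\eta$ as a global rational $2$-form whose polar set is contained in $\mathcal{C}$ and whose weight is again $-1$, so that the obstruction to $d\eta=0$ is a section of a fixed line bundle whose only local contribution sits at $p$ and is governed by the GSV index. The vanishing $GSV_p(F,\mathcal{C})=0$ obtained above, together with the fact that a second-type singularity with a complete algebraic (hence convergent) separatrix set is locally quasi-homogeneous --- the very property exploited in Corollary \ref{indices-nilpotentes} --- should force this obstruction to vanish, either by a direct local computation of the residue at $p$ or by a dimension count showing the relevant space of global $2$-forms is trivial. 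Making this reduction rigorous, rather than the (routine) index bookkeeping, is where the real work lies.
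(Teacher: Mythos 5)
Your overall architecture (the cycle $(1)\Rightarrow(2)\Rightarrow(3)\Rightarrow(1)$) and your index bookkeeping coincide with the paper's: the paper also gets $(d+2)^2 = t^2 + 2\bigl((d+2)t-t^2\bigr)$ from \eqref{CS}, \eqref{GSV}, \eqref{BB} and \eqref{formula-indices}, hence $t=d+2$. But there is a genuine gap in your proof of $(3)\Rightarrow(1)$, and it sits exactly where you say the ``real work lies.'' After obtaining $t=d+2$, the paper does not construct the logarithmic form by hand: it invokes Cerveau's theorem \cite[Thm.~4]{cerveau-jsing}, which states precisely that a non-dicritical foliation of degree $d$ on $\CP$ with an invariant algebraic curve of degree $d+2$ is logarithmic. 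That theorem is the closedness statement $d(\omega/f)=0$ you are missing; it is a substantive result whose proof uses non-dicriticality in an essential way, not a formal consequence of the index identities. Your proposed route --- reading $d(\omega/f)$ as a section of a line bundle whose ``only local contribution'' is governed by $GSV_p(F,\mathcal{C})=0$, plus local quasi-homogeneity --- is not a proof and is unlikely to close as stated: the vanishing $GSV_p(F,\mathcal{C})=0$ is a numerical identity already equivalent to $t=d+2$ via \eqref{GSV}, so it carries no information beyond what you have used, and there is no direct mechanism by which it forces a global rational $2$-form to vanish. As written, your argument proves the lemma only modulo re-proving Cerveau's theorem.

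Two smaller remarks. First, your sketch of $(1)\Rightarrow(2)$ (closed simple-pole forms pull back to closed simple-pole forms under blow-up, hence no saddle-nodes) is the right idea but is also left incomplete: one must actually exclude that a saddle-node is defined by a closed form with simple poles along invariant branches, e.g.\ via the residues/Camacho--Sad indices (the strong separatrix of a saddle-node has index $0$, incompatible with a nonzero ratio of residues); the paper simply cites \cite[Sec.~6]{fernandez-mol} for this implication. Second, once closedness \emph{is} known, your remaining steps (simple poles along $\mathcal{C}$, Euler contraction giving $\sum_i \lambda_i \deg f_i = 0$, and the classical fact that a closed rational $1$-form on $\CP$ with only simple poles is of the form $\sum_i \lambda_i\, df_i/f_i$) are correct, so the gap is isolated: supply a proof of, or a citation for, Cerveau's theorem and your argument becomes complete.
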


\begin{proof} Let $F$ be a  foliation on $\CP$ of degree $d$ with a unique non-dicritical singular point $p$ and with an algebraic invariant curve $\mathcal{C}$ of degree $t$. 
\\

Remember that a foliation on $\CP$ is logarithmic if there are homogeneous, irreducible polynomials $f_1,\ldots ,f_m \in \C[x,y,z]$, of degrees $d_1,\ldots ,d_m$, respectively, such that a 1-form describing the foliation is:

\begin{equation*}
\Omega=f_1 \cdot \ldots\cdot f_m \sum_{i=1}^m \lambda_i \frac{df_i}{f_i},
\end{equation*}

\noindent where $\lambda_1,\ldots, \lambda_m$ are non-zero complex numbers such that $\sum_{i=1}^m \lambda_i d_i=0$. Note that the foliation has degree $\sum_{i=1}^m d_i-2$. A good reference for studying logarithmic foliations on $\CP$ where the poles are not necessarily in a normal crossing divisor is  \cite{cerveau-jsing}.  
\\

A  logarithmic foliation on $\CP$ is always a generalized curve \cite[Sec. 6]{fernandez-mol}, and a generalized curve foliation is of course a foliation of the second type.
\\

Now assume that $F$ is a foliation of the second type. 
From \eqref{CS}, \eqref{GSV}, \eqref{BB} and \eqref{formula-indices} we have 
$(d+2)^2=t^2+2((d+2)t-t^2)$, as $\mathcal{C}$ contains all the separatrices through $p$. We conclude that $t=d+2$. By  \cite[Thm. 4]{cerveau-jsing},  $F$ is a logarithmic foliation.
\end{proof}

\subsection{Foliations with an algebraic leaf}

To determine if a foliation on $\CP$ has algebraic leaves is a very difficult task. Lemma \ref{logarithmic} can be used to prove this for the case of foliations with a unique singular point. For example, if the foliation is of  second type but it is not a generalized curve, then the foliation cannot have algebraic leaves containing all separatrices.
\\

Once we know that there is some algebraic leaf it is also difficult to find the polynomial equations defining it. It is for this reason that results such as the following are very useful.

\begin{prop} Let $d \geq 2$, and $F \in \F_d$ be a foliation with a unique non-dicritical singular point $p$. If $f(x,y,z) \in \C[x,y,z]$ is a reduced equation of an algebraic curve $\mathcal{C}$ which is invariant by $F$ and if $f$ is in the singular scheme of $F$, then $F$ is logarithmic and the germ of $f$  at $p$ is quasi-homogeneous.
\end{prop}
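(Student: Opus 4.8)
The plan is to read the hypothesis $f\in I$ through the GSV index and then confront it with the global index theorems. Since $f$ lies in the local (formal) singular scheme $I$ of $F$ at $p$, we have $\langle I,f\rangle=I$, so the general form of the GSV index gives
\[
GSV_p(F,\mathcal{C})=\dim_{\C}\frac{\widehat{\mathcal{O}}_0}{\langle I,f\rangle}-\tau_f=\dim_{\C}\frac{\widehat{\mathcal{O}}_0}{I}-\tau_f=\mu_p(F)-\tau_f=d^2+d+1-\tau_f,
\]
where the last equality uses Proposition \ref{milnor} and that $p$ is the only singularity. On the other hand, writing $t=\deg\mathcal{C}$ and using $\Sing(F)\cap\mathcal{C}=\{p\}$, the index theorems \eqref{CS} and \eqref{GSV} reduce to $CS_p(F,\mathcal{C})=t^2$ and $GSV_p(F,\mathcal{C})=(d+2)t-t^2$. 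Comparing the two computations of the GSV index produces the single numerical relation $\tau_f=t^2-(d+2)t+d^2+d+1$, which drives the whole argument.

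The first step is to pin down $t$. As $\mathcal{C}$ is invariant, it is smooth at every regular point of $F$, hence its only singularity is $p$ and $\tau_f$ equals the total Tjurina number of $\mathcal{C}$; for a reduced plane curve of degree $t$ this is bounded by $(t-1)^2$, with equality exactly for an ordinary $t$-fold point. Substituting the relation above into $\tau_f\le (t-1)^2$ yields $t\ge d+1$, while Carnicer's theorem (applicable since $p$ is non-dicritical) gives $t\le d+2$. Thus $t\in\{d+1,d+2\}$.

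The main obstacle is to discard $t=d+1$. In that case the relation forces $\tau_f=d^2=(t-1)^2$, so equality holds in the Tjurina bound and $\mathcal{C}$ must be an ordinary $(d+1)$-fold point, i.e. a union of $d+1$ concurrent invariant lines through $p$. I would rule this out by a Baum--Bott computation: if these lines exhausted the separatrices of a second type foliation, the identity \eqref{formula-indices} would give $BB_p(F)=CS_p(F,\mathcal{C})+2GSV_p(F,\mathcal{C})=2(d+2)(d+1)-(d+1)^2=d^2+4d+3$, contradicting $BB_p(F)=(d+2)^2$ from \eqref{BB}; the complementary possibility, that $\mathcal{C}$ is a proper part of the separatrices, is to be excluded because Carnicer's bound leaves no room for further algebraic branches and the required extra separatrix cannot be accommodated (concretely, a pencil of $d+1$ concurrent lines only arises in a logarithmic foliation of degree $d-1$, not $d$). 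Turning these remarks into an airtight exclusion is the delicate heart of the proof.

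Once $t=d+2$ is established, the relation yields $\tau_f=d^2+d+1=\mu_p(F)$. Denoting by $\mathcal{B}$ the (possibly formal) total separatrix at $p$, one has $\mathcal{C}\subseteq\mathcal{B}$, and since the Milnor number of a germ is strictly monotone under adjoining branches, $\mu_f=\mu_p(\mathcal{C})\le\mu_p(\mathcal{B})\le\mu_p(F)=d^2+d+1=\tau_f\le\mu_f$. All these inequalities are therefore equalities, giving three conclusions: $\mu_f=\tau_f$, whence by Saito's criterion the germ of $f$ at $p$ is quasi-homogeneous; $\mu_p(\mathcal{C})=\mu_p(\mathcal{B})$, which forces $\mathcal{C}=\mathcal{B}$, so that $\mathcal{C}$ contains all separatrices; and $\mu_p(F)=\mu_p(\mathcal{B})$, which by Camacho--Lins Neto--Sad means that $F$ is a generalized curve, in particular of second type. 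Finally, $F$ being a non-dicritical second type foliation whose invariant algebraic curve $\mathcal{C}$ carries all the separatrices, Lemma \ref{logarithmic} shows that $F$ is logarithmic, which together with the quasi-homogeneity of the germ of $f$ completes the proof.
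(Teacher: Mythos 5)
Your opening coincides with the paper's: from $f\in I$ and Proposition \ref{milnor} you get $GSV_p(F,\mathcal{C})=d^2+d+1-\tau_f$, compare with \eqref{GSV} to obtain $\tau_f=t^2-(d+2)t+d^2+d+1$, and bound $t\le d+2$ by Carnicer. Your lower bound $t\ge d+1$ (from $\tau_f\le\mu_f\le(t-1)^2$) replaces the paper's appeal to Campillo--Olivares, and your treatment of $t=d+2$ is a correct alternative route: where the paper quotes Cerveau's theorem (a non-dicritical foliation of degree $d$ with an invariant curve of degree $d+2$ is logarithmic) and then Mattei's Theorem A for quasi-homogeneity, you extract both conclusions from the chain of equalities $\tau_f=\mu_f=\mu_p(\mathcal{B})=\mu_p(F)$, via Saito's $\mu=\tau$ criterion, the Camacho--Lins Neto--Sad equality characterizing generalized curves, and Lemma \ref{logarithmic}. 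That part stands, granting the standard fact that for non-dicritical singularities the inequality $\mu_p(\mathcal{B})\le\mu_p(F)$ and its equality case hold with the formal separatrices included.

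The genuine gap is the one you flag yourself: the exclusion of $t=d+1$. Having correctly reduced that case to $\mathcal{C}$ being a union of $d+1$ concurrent invariant lines through $p$, your Baum--Bott contradiction rests on \eqref{formula-indices}, which is valid only for second type foliations and only when $\mathcal{C}_p$ is the complete (formal) separatrix set; at this stage of the argument neither hypothesis is available --- they are precisely what gets established in the case $t=d+2$ --- and if $F$ failed to be of second type no contradiction would follow. The parenthetical fallback is moreover false: a logarithmic foliation of degree $d$ can perfectly well carry a pencil of $d+1$ invariant concurrent lines (take $d+2$ invariant lines, $d+1$ of them concurrent and one in general position; such a foliation merely has several singular points). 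The paper disposes of $t=d+1$ in one stroke with Ploski's theorem, $\tau_f\le\mu_f\le(t-1)^2-\lfloor t/2\rfloor<d^2$, whose only exception is exactly your concurrent-lines configuration (an exception the paper's citation passes over silently); that configuration can then be eliminated by an elementary argument using only the hypotheses you have at that point: invariance of $d+1$ distinct lines through $p$ forces every homogeneous jet of the local vector field of order $<d$ to be a multiple of the radial field, hence to vanish by non-dicriticality, and uniqueness of the singular point (in the affine chart and at infinity) then forces the order-$d$ jet to vanish on all $d+1$ directions, hence identically, leaving a radial field without isolated singularities --- absurd. Without Ploski's bound or some such supplementary argument, your proof of the decisive case does not close.
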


\begin{proof} Assume that $f$ is in the singular scheme of $F$. By \cite[Thm. 3.2]{campillo-olivares}, we know that in the singular scheme of $F$ there are no curves of degree less than or equal to $d$, therefore $d+1 \leq t \leq d+2$.
\\

We  fix the singular point in $p=(1:0:0)$ and take the local ideal 
$I \subset \C[y,z]$. Denote $f_{\ast}=f(1,y,z)$. Then $\dim \frac{\C[y,z]}{\langle I,f_{\ast} \rangle}=d^2+d+1$, since $f\in I$. Therefore:

$$GSV_p(F,f)=\dim \frac{\C[y,z]}{\langle I,f_{\ast} \rangle}-\tau_f=d^2+d+1-\tau_f=t(d+2)-t^2.$$

\noindent If $t=d+1$ this index is $d+1$ and
$\tau_f=d^2$; but this is impossible because, by  \cite[Thm. 1.1]{ploski}, $\tau_f \leq \mu_f \leq (t-1)^2-\left[ \dfrac{t}{2}\right] =d^2-\left[ \dfrac{d-1}{2}\right]$.
\\

If $t=d+2$, we have that $F$ is a logarithmic foliation \cite[Thm. 1.4]{cerveau-jsing} and $\mathcal{C}$ contains all the separatrices through $p$. 

From Lemma \ref{logarithmic}, we have that the foliation $F$ is a generalized curve then, by  \cite[Thm. A]{mattei}, $f_{\ast}$ is quasi-homogeneous because it is in the local singular scheme of the foliation.
\end{proof}

In fact, we have some evidence that indicates that this type of foliations (logarithmic, non dicritical, and quasi-homogeneous, with a unique singular point) do not exist, but for now, we consider it as a conjecture.
\\

The study of foliations where the unique singular point is dicritical and we have algebraic leaves will be developed in the following section.

\section{Foliations with a rational first integral} \label{rational-first-integral}

The well-known Poincar\'e problem for foliations on $\CP$ states the following:  given a foliation on $\CP$ of degree $d$ with a rational first integral of degree $s$ then, can we bound $s$ as function of $d$? 
The answer in general is no, as the following classic example shows. Consider the foliation on $\CP$ of degree 1 given by the 1-form
\begin{equation*}
pyzdx+qxzdy-(p+q)yxdz,
\end{equation*}
\noindent with $p$ and $q$ positive integers. Then the pencil  $\{\alpha x^py^q - \beta z^{p+q}\}_{(\alpha:\beta) \in \mathbb{CP}^1}$ of degree $p+q$ defines its rational first integral. We can choose $s=p+q$ arbitrarily large but the foliation always has degree $1$.
\\

Let $F$ be a foliation on $\CP$ of degree $d$ with a unique singular point at $p=(1:0:0)$ and with a rational first integral given globally by the pencil:

\begin{equation*}
\big\{ \alpha G(x,y,z)-\beta H(x,y,z) \big\}_{(\alpha:\beta ) \in \mathbb{CP}^1},
\end{equation*}

\noindent where $G$ and $H$ are algebraic plane curves of degree $s$ with no common factors (without loss of generality we can suppose that $G$ is irreducible). Then there exists a finite set $\{(\alpha_1: \beta_1),\ldots ,(\alpha_k: \beta_k)\} \subset \mathbb{CP}^1$ such that the corresponding fibers are reducible or not reduced (see \cite{jouanolou}). Suppose:

\begin{equation*}
\alpha_i G(x,y,z)-\beta_i H(x,y,z)=\prod_{j_i} U_{i,j_i}^{n_{i,j_i}},
\end{equation*}

\noindent where $U_{i,j_i}$ are irreducible plane curves of degrees $d_{i,j_i}$ and $n_{i, j_i} \in \N$. If we denote $R=\prod_{i=1}^k \prod_{j_i} U_{i,j_i}^{n_{i,j_i}-1}$ and we consider the 1-form $\Omega$ defining the foliation $F$, then it satisfies:

\begin{equation} \label{eq}
GdH-HdG=R \Omega,
\end{equation}

\noindent which implies that if $R$ has degree $N$ then $2s-2=N+d$. Observe that $R$ is a union of leaves of $F$, and for $i=1,\ldots,k$, we have that the degree of $\prod_{j_i} U_{i,j_i}^{n_{i,j_i}-1}$ is $\sum_{j_i} d_{i,j_i}(n_{i,j_i}-1) < s$.
\\

In this section, we study some foliations with a unique singular point and with a rational first integral of degree $s$. Due to the above, we have that
$s \geq \frac{d+2}{2}$. We are going to give some examples where this bound is reached and some others that represent a counterexample to the Poincar\'e problem.

\begin{example} It is easy to see that the unique foliation on $\CP$ of degree $1$ with a unique singular point is, up to change of coordinates by the special linear group $SL(3,\C)$, the following:
$$z^2dx-yzdy+(y^2-xz)dz,$$
\noindent and this foliation has a rational first integral of degree $2$, given by the pencil of conics: 
$$\{\alpha(2xz+y^2)-\beta z^2\}_{(\alpha:\beta)  \in \mathbb{CP}^1}.$$
\end{example}
The following theorem gives, for every even number $d$, examples of foliations on $\CP$ of degree $d$ with a unique singular point and with a rational first integral of degree $\frac{d+2}{2}$.

\begin{theorem} \label{final} Let $n \in \N_{\geq 2}$. Consider the pencil of degree $n+1$:
$$\left\{ \alpha y(xy^{n-1}+z^n)-\beta(z(xy^{n-1}+z^n)+y^{n+1})\right\}_{(\alpha:\beta) \in \mathbb{CP}^1},$$
\noindent then the associated foliation on $\CP$ has a unique singular point and it has degree $2n$. This means that the pencil does not have non-reduced fibers.
\end{theorem}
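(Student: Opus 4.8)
The plan is to work directly with the $1$-form $\omega = G\,dH - H\,dG$ attached to the pencil, where $G = y(xy^{n-1}+z^n)$ and $H = z(xy^{n-1}+z^n)+y^{n+1}$ are the two generators, both homogeneous of degree $s = n+1$. Since $G$ and $H$ are homogeneous of the same degree, Euler's identity guarantees that $\omega$ satisfies $xA+yB+zC = 0$, so it does define a foliation, and its coefficients $A,B,C$ are homogeneous of degree $2n+1$. Hence, if one can show that $A,B,C$ have no common factor (equivalently, that the polynomial $R$ of \eqref{eq} is a nonzero constant), the foliation is given by $\omega$ itself and has degree $(2n+1)-1 = 2n$; by the relation $2s-2 = N+d$ this is exactly the assertion that $N = 0$, i.e.\ that the pencil has no non-reduced fibres.

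The key computational step I would carry out is to introduce $P := xy^{n-1}+z^n$, so that $G = yP$ and $H = zP + y^{n+1}$. With this notation $dG = P\,dy + y\,dP$ and $dH = P\,dz + z\,dP + (n+1)y^n\,dy$, and upon expanding $G\,dH - H\,dG$ the two terms carrying $yzP\,dP$ cancel. This is the main (and essentially the only) obstacle: making this cancellation explicit and collecting the remaining terms. Using $dP = y^{n-1}\,dx + (n-1)xy^{n-2}\,dy + nz^{n-1}\,dz$ one is left with
$$\omega = -y^{2n+1}\,dx + \bigl(n y^{n+1}P - zP^2 - (n-1)xy^{2n}\bigr)\,dy + \bigl(yP^2 - n y^{n+2}z^{n-1}\bigr)\,dz.$$

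From this explicit form both claims follow quickly. The coefficient of $dx$ is $A = -y^{2n+1}$, so any common factor of $A,B,C$ must be a power of $y$; but setting $y=0$ in the coefficient $B$ of $dy$ leaves $-zP^2 = -z^{2n+1}$ (here $n\ge 2$ is used, so that the term $xy^{n-1}$ in $P$ vanishes on $y=0$ and $P$ reduces to $z^n$), which is not divisible by $y$. Hence $\gcd(A,B,C)=1$, so $R$ is constant, the degree is $2n$, and no fibre is non-reduced. For the singular locus I would solve $A=B=C=0$: the equation $A=0$ forces $y=0$, and then $B = -z^{2n+1}$ forces $z=0$, so the unique common zero in $\CP$ is $(1:0:0)$. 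Thus the foliation has a single singular point, which, by Proposition \ref{milnor}, carries the full Milnor number $(2n)^2+2n+1$.
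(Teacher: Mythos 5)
Your proof is correct, and it takes a genuinely different route from the paper's. The paper works in the affine chart $x=1$: it expands the local $1$-form $(gf_y-fg_y)\,dy+(gf_z-fg_z)\,dz$, asserts (as a ``simple computation'', not shown) that the origin has multiplicity $2n-1$ and Milnor number $(2n)^2+2n+1$, and then concludes via Proposition \ref{milnor}: a foliation of degree $2n$ can carry total Milnor number at most $(2n)^2+2n+1$, so all of it is concentrated at $p=(1:0:0)$, forcing $p$ to be the unique singularity and the fibers to be reduced (a non-reduced fiber would lower the degree, hence the available Milnor number, below what $p$ already accounts for). You instead stay in homogeneous coordinates; the substitution $P=xy^{n-1}+z^n$ makes the cancellation of the $yzP\,dP$ terms transparent and yields a closed form for $\omega=G\,dH-H\,dG$, from which everything is read off directly: $\gcd(A,B,C)=1$ (so the $R$ of \eqref{eq} is constant, the degree is $2n$, and no fiber is non-reduced, using $n\geq 2$ exactly where you say), and the singular locus $\{A=B=C=0\}=\{y=z=0\}$ is literally one point. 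The logical direction on the Milnor number is thus reversed: for you it is a corollary of uniqueness via Proposition \ref{milnor}, whereas for the paper it is the engine of the proof. Your version buys complete explicitness --- no unverified local-algebra dimension count, and no implicit appeal to the fact that a nonconstant common factor of the coefficients (a union of fiber components, all passing through the unique base point) would have to vanish at $p$ and ruin finiteness of the Milnor number; the paper's version buys, as a by-product, the local invariants of the singularity (multiplicity $2n-1$ and Milnor number), which your argument recovers only after the fact.
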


\begin{proof} Let $n \in \N$, we will work in the affine chart $U_0=\{ (1:y:z) \in \CP\}$. Consider the following algebraic curves:
\begin{align*}
f(y,z)&=y(y^{n-1}+z^n)\\
g(y,z)&=z(y^{n-1}+z^n)+y^{n+1}.
\end{align*}
%
\noindent The local form of the foliation is:
\begin{align*}
\omega=&(gf_y-fg_y)dy+(gf_z-fg_z)dz=\\
&(y^{2n-2}z+2y^{n-1}z^{n+1}-y^{2n}-ny^{n+1}z^n+z^{2n+1})dy+\\
&(y(-y^{2n-2}-2y^{n-1}z^n+ny^{n+1}z^{n-1}-z^{2n}))dz,
\end{align*}
\noindent and a simple computation shows that the multiplicity of the singular point $p=(1:0:0)$ is $2n-1$ and the Milnor number is $(2n)^2+2n+1$.
Since the foliation has degree $2n$, then $p$ is the unique singular point and the pencil does not have non-reduced fibers.
\end{proof}

\begin{prop} \label{d+1} The following is an irreducible, locally closed subvariety of dimension $d+4$ of $\F_d$:

\begin{equation*}
\mathcal{R}(d,d+1)=SL(3,\C)\big\{ \XX=P(y,z) \frac{\partial}{\partial x}+z^d \frac{\partial}{\partial y}: P(y,z) \in \C^d[y,z], P(y,0) \neq 0 \big\}.
\end{equation*}

\noindent This is in the space of foliations on $\CP$ of degree $d$ with a unique singular point, with a rational first integral of degree $d+1$.

\end{prop}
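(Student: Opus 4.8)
The plan is to study one member of the generating family, exhibit its rational first integral explicitly, and then organise the $SL(3,\C)$-orbit as a fibre bundle over a flag variety in order to read off irreducibility and dimension. Throughout, write $\mathcal B$ for the slice $\{\XX=P(y,z)\partial_x+z^d\partial_y:P\in\C^d[y,z],\ P(y,0)\neq0\}$ and $P=\sum_{j=0}^d c_jy^jz^{d-j}$ with $c_d\neq0$. First I would pass to the homogeneous $1$-form attached to $\XX$, namely $\omega=-z^{d+1}dx+zP\,dy+(xz^d-yP)\,dz$. Setting the three coefficients to zero forces $z=0$, and then $xz^d-yP=-y\,P(y,0)=-c_dy^{d+1}=0$ gives $y=0$, so the only singular point is $(1:0:0)$. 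Since $P(y,0)\neq0$ the variable $z$ does not divide $xz^d-yP$, hence the three coefficients are coprime and the foliation has degree exactly $d$; by Proposition \ref{milnor} its Milnor number at $(1:0:0)$ equals $d^2+d+1$. As degree, number of singular points, and the first integral below are all preserved by linear changes of coordinates, it suffices to verify them on $\mathcal B$ to obtain them on all of $\mathcal R(d,d+1)=SL(3,\C)\cdot\mathcal B$.

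Next I would produce the first integral. A direct computation gives $\XX\big(xz^d-\sum_{j=0}^d\frac{c_j}{j+1}y^{j+1}z^{d-j}\big)=Pz^d-z^dP=0$ and $\XX(z^{d+1})=0$, so with $H=xz^d-\sum_{j=0}^d\frac{c_j}{j+1}y^{j+1}z^{d-j}$ the function $H/z^{d+1}$ is a rational first integral and $\{\alpha H-\beta z^{d+1}\}$ is a pencil of degree $d+1$. To see that $d+1$ is the primitive degree I would observe that every member with $\alpha\neq0$ is linear in $x$ with leading coefficient $\alpha z^d$, while its $x$-free part contains the monomial $\tfrac{\alpha c_d}{d+1}y^{d+1}$; since $\gcd(z^d,\text{$x$-free part})=1$ the member is irreducible, so the general fibre is irreducible and the first integral is primitive of degree $d+1$. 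The unique non-reduced member is $z^{d+1}$, supported on the line $z=0$, in agreement with the relation $2s-2=N+d$ of Section \ref{rational-first-integral} for $s=d+1$, $N=d$.

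Finally I would assemble the variety. The slice $\mathcal B$ is the image of $\{(c_0,\dots,c_d):c_d\neq0\}$ under the affine-linear, injective map $P\mapsto\omega$ (injective because the $dx$-coefficient is pinned to $-z^{d+1}$), so $\mathcal B$ is an irreducible locally closed subvariety of $\F_d$ of dimension $d+1$. The core step is to identify the linear maps preserving $\mathcal B$. On one hand, the elementary shears $x\mapsto x+\tau y$, $x\mapsto x+\rho z$, $y\mapsto y+\sigma z$ and the diagonal torus each carry a member of $\mathcal B$ to another member (short computations, a scalar renormalisation restoring the $\partial_y$-coefficient to $z^d$ when needed); these generate the upper-triangular Borel subgroup $B\subset SL(3,\C)$, of dimension $5$. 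On the other hand, the singular point $(1:0:0)$ and the line $z=0$ (support of the unique non-reduced member of the pencil) are intrinsic to the foliation, so any $h$ with $h\cdot b_0\in\mathcal B$ must fix $(1:0:0)$ and preserve $z=0$, which forces $h$ upper-triangular; hence $\{h\in SL(3,\C):h\cdot b_0\in\mathcal B\}=B$ exactly for every $b_0\in\mathcal B$.

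Consequently the orbit map descends to an injection $SL(3,\C)\times_B\mathcal B\hookrightarrow\F_d$ with image $\mathcal R(d,d+1)$; this associated bundle fibres over the flag variety $SL(3,\C)/B$, which is projective of dimension $3$, with fibre $\mathcal B$, yielding irreducibility and $\dim\mathcal R(d,d+1)=3+(d+1)=d+4$. I expect the main obstacle to be these two inclusions pinning the family-preserving group down to exactly the Borel—especially the converse, which rests on the intrinsic characterisation of the singular point and the special line—together with the more technical point of upgrading the injective immersion $SL(3,\C)\times_B\mathcal B\hookrightarrow\F_d$ to a locally closed embedding; the latter I would settle by showing the orbit is open in its Zariski closure, the boundary consisting of degenerations that either acquire additional singular points or carry a first integral of a different combinatorial type.
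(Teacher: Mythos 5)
Your computation of the first integral and of the uniqueness of the singular point coincides with the paper's own proof: the paper exhibits exactly the pencil you found (up to replacing $\alpha$ by $-\alpha$) and leaves the verification as ``easy to see,'' which is your computation $\XX(H)=\XX(z^{d+1})=0$ together with the coprimality check. Where you genuinely diverge is on the structural claims. The paper does not prove irreducibility, local closedness, or the dimension count at all; it cites \cite[Thm. 4.1]{alcantara1} for all three. You instead attempt a self-contained argument, and the part of it concerning irreducibility and dimension is sound: the slice $\mathcal B$ is an injectively parametrized, $(d+1)$-dimensional locally closed set (the $dx$-coefficient pins down the scalar); the torus and the three upper shears do preserve $\mathcal B$; conversely, since every pencil member with $\alpha\neq 0$ is irreducible of degree $d+1>1$, the line $z=0$ is the unique invariant line, so any $h$ carrying one element of $\mathcal B$ to another must fix $(1:0:0)$ and preserve $z=0$, hence is upper triangular; and the resulting injection of $SL(3,\C)\times_B\mathcal B$ into $\F_d$ gives an irreducible constructible image of dimension $8-5+(d+1)=d+4$. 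This is a genuine, and more informative, alternative to the paper's citation.

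The genuine gap is local closedness, and your proposed remedy does not close it. First, ``showing the set is open in its Zariski closure'' is not a strategy but a restatement of the definition: for a constructible set, being locally closed \emph{means} being open in its closure. Second, $\mathcal R(d,d+1)$ is not an orbit --- for $d\geq 5$ its dimension $d+4$ already exceeds $\dim SL(3,\C)=8$ --- so the standard theorem that orbits of algebraic groups are locally closed cannot be invoked; a sweep $SL(3,\C)\cdot\mathcal B$ of a positive-dimensional family is in general only constructible. Third, your guess at the boundary is wrong: inside the open locus of $1$-forms with isolated singularities, having a unique singular point is a \emph{closed} condition (by conservation of the total Milnor number $d^2+d+1$, singular points of a converging family can collide but cannot split, so a limit of unique-singularity foliations cannot acquire extra singular points). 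Hence the boundary of $\mathcal R(d,d+1)$ consists of $1$-forms whose coefficients acquire a common factor, together with unique-singularity foliations of a different type, and one must actually prove that this locus is closed. That is precisely the content the paper imports from \cite[Thm. 4.1]{alcantara1}; without it, or a direct substitute, your argument establishes only that $\mathcal R(d,d+1)$ is an irreducible constructible subset of $\F_d$ of dimension $d+4$, not that it is a locally closed subvariety.
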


\begin{proof} If $P(y,z)=\sum_{i=0}^d a_iy^iz^{d-i}  \in \C^d[y,z]$, with $a_d\neq 0$, then it is easy to see that the rational first integral of:

\begin{equation*}
\XX=P(y,z) \frac{\partial}{\partial x}+z^d \frac{\partial}{\partial y},
\end{equation*}

\noindent is given by the pencil of degree $d+1$:

\begin{equation*}
\left\{ \alpha \left( \sum_{i=0}^d  \frac{a_i}{i+1}y^{i+1}z^{d-i}-xz^d\right) - \beta z^{d+1} \right\}_{(\alpha: \beta ) \in \mathbb{CP}^1}.
\end{equation*}

\noindent Finally, by \cite[Thm. 4.1]{alcantara1}, we have that this space is an irreducible, locally closed algebraic variety of dimension $d+4$. 
\end{proof}

We are going to finish this section by building an example of a foliation of degree 5 with a first integral of not bounded degree.

\begin{example}
Let $n,n_1,n_2,n_3 \in \N$ such that $n_1+n_2+n_3=4n$ and each $n_i$ is relatively prime to $n$. Consider the pencil of degree $4n$:

$$\Big\{ \alpha \big(xzy(z-y)+z^4+y^4\big)^n-\beta z^{n_1}y^{n_2}(z-y)^{n_3}\Big\}_{(\alpha:\beta) \in \mathbb{CP}^1}.$$

\noindent The associated local foliation is:

\begin{align*}
  &\omega=\Big(-n_2z^5y+(n_2+n_3)z^4y^2+(n_1+n_3)zy^5-n_1y^6\\
  &+(2n-n_1-n_3)z^3y^2+(-3n+2n_1+n_3)z^2y^3+(n-n_1)zy^4\Big)dz \\
  &+\Big(n_2z^6+(-n_2-n_3)z^5y+(-n_1-n_3)z^2y^4+n_1zy^5+\\
  &(n-n_2)z^4y+(-3n+2n_2+n_3)z^3y^2+(2n-n_2-n_3)z^2y^3\Big)dy.
\end{align*}

\noindent It has degree $5$, a unique singular point in $(0:0:1)$ and its rational first integral has degree $4n$. 
\end{example}

\section{Strata of the space of foliations and its separatrices} \label{strata}
In this section, we shall study other families of foliations of low degree, with only one singular point. We are interested in non-dicritical foliations, and the possible presence of a divergent separatrix. When the degree is $d=3$, in \cite{alcantara-ronzon} unstable foliations are classified: they fall into families that are called $S_6$, $S_9$, $S_{11}$, $S_{12}$ and $S_{15}$ in the aforementioned paper. Let us study them here from the point of view of the separatrices and reduction of singularities, as we are studying in this paper.

\subsection{Stratum $S_6$}  It is the case of a foliation with a rational first integral, already considered in Section \ref{rational-first-integral} in arbitrary degree.

\subsection{Stratum $S_9$} It is the family
$$
z(az^2+ P(y,z))dy+(bz^3-y (az^2+ P(y,z)))dz,
$$
where $P(y,z)= y^3 +a_2 y^2 z +a_1 yz^2 + a_0 z^3$ is homogeneous of degree 3. The condition $b\neq 0$ is enough to guarantee that only one singularity appears. Assuming $a\neq 0$, we reduce the singularities after three blow-ups, which originate an exceptional divisor with components $E_i$, $i=1,2,3$. The singularities at the end are:
\begin{itemize}
\item $p_{1}  = E_1 \cap E_3$.  It is a saddle-node in a corner. 
\item $p_{2} = E_2\cap E_{3}$. Resonant hyperbolic singularity.
\item $p_{3} \in E_2$.  Hyperbolic singularity, with a separatrix ($z=0$) transversal to the divisor.
\item $p_{4}  \in E_3$.  Saddle-node with weak separatrix transversal to the divisor.
\end{itemize}

Let us study the convergence of the separatrix through $p_{4}$. For this, take local coordinates $(u,v)$ around the corner $p_1$: $y=uv^2$, $z=uv^3$.  In these coordinates the foliation is $v^2 du-u (Q(v) u+1-3v)$, assuming without lost of generality that $a=1$, $b=1$, and $Q(v)= 1+a_2 v+ a_1v^2+a_0v^3$. The component $E_3$ is $v=0$, and the singularity $p_{4}$ appears as $u=-1$. This is  Bernoulli type foliation, which can be explicitly integrated: after the transformation $u=\dfrac{1}{w}$ the new equation is $v^2 dw+ (Q(v)+(1-3v)w)dw$, linear. Integrating this equation it turns out that
$$
w(v) = -(1+(a_2+3)v + (2a_2+a_1+6)v^2 + (2a_2+a_1+a_0+8)z^3)
$$
is a (polynomial) equation of the weak separatrix through $p_4$, which is, in fact, algebraic. In the original coordinates of the foliation, this separatrix  lies over the algebraic cusp of equation
$$
z^2 + (y^3+ (a_2+3)y^2z + (2a_2+a_1+6)yz^2 + (2a_2+a_1 + a_0+6) z^3)=0.
$$
Remark that it is not a second type foliation, due to the presence of a saddle-node in the corner $p_1$.

The family $S_9$, as explained in \cite{alcantara-ronzon}, contains another case: the foliations
$$
zP(y,z) dy + (bz^3 +cyz^2 -y P(y,z))dz,
$$
with $P(y,z)$ as before. The conditions in order to have only one singularity are $c\neq 0$ and $bz^3 +cy z^2$ not dividing $P(y,z)$. After three blow-ups, as before, we have three components $E_1$, $E_2$, and $E_3$ of the exceptional divisor, and the following singularities:
\begin{itemize}
\item $p_{1}  = E_1 \cap E_3$,  hyperbolic resonant.
\item $p_{2}  = E_2\cap E_{3}$,  hyperbolic resonant.
\item $p_{3}  \in E_1$,  saddle-node with a convergent separatrix transversal to the divisor.
\item $p_{4}  \in E_2$,  hyperbolic resonant.
\item $p_{5}  \in E_3$,  with eigenvalues $\{ 1,3\}$. It is, either a Poincaré-Dulac singularity or a dicritical one. A closer study shows that it is, indeed, dicritical.
\end{itemize}

As a conclusion, the foliations of the family $S_9$ are not of second type, and no divergent separatrix appears.

\subsection{Stratum $S_{11}$} It is a family described in local coordinates by the 1-form
$$
\omega = z(a_1 yz+a_0 z^2 + P_3 (y,z))dy + (z^2+Q_2 (y,z) - y (a_1 yz+a_0 z^2 + P_3 (y,z)))dz 
$$
where $Q_2 (y,z)$, $P_3 (y,z)$ are homogeneous polynomials of degrees 2, 3, respectively, and $P_3 (y,0)\neq 0$. The origin is the unique singularity  provided that $P_3 (y,z)= Q_2 (y,z) (a_1 y + a_0 z) + cz^3$, with $c\neq 0$. 

Reduction of singularities is as follows: blow up the origin, obtaining a component $E_1$ of the exceptional divisor, with only one singular point on it.  Blow it up and a second component $E_2$ appears with four singularities on it:
\begin{itemize}
\item $p_0=E_1\cap E_2$, simple, resonant.
\item $p_1$, resonant.
\item $p_2$ is a saddle-node with a weak separatrix transversal to the divisor.
\item $p_3$ with eigenvalues $\lambda_1$, $\lambda_2$, $\lambda_1/\lambda_2 =2$. It is, either a Poincar\'{e}-Dulac singularity, or a dicritical singular point.
\end{itemize}

So, foliations in the family $S_{11}$ are never second type foliations. Nevertheless, let us show in a simplified case how it can be proven that the weak separatrix through $p_2$ actually converges. Indeed, assume that $a_1=1$, $a_0=0$, $Q_2 (y,z)=y^2$, $c=1$. The two blow-ups can be expressed, in appropriate charts, as:
$$
 \begin{cases} y & =y_1 \\ z & = y_1z_1 \end{cases}  \hspace{0.5cm} \text{ and} \qquad  \begin{cases} y_1 & =y_2z_2 \\ z_1 & = z_2 \end{cases} .
$$
Renaming $y_2$, $z_2$ as $y$, $z$, the foliation around $p_0$ is
$$
(1+y) zdy - (y(y^2-y-2) + y^3z^3)dz,
$$
where $E_1 \equiv (y=0)$, $E_2 \equiv (z=0)$. $p_2$ is the point with coordinates $(-1,0)$ and $p_3$ is $(2,0)$.

This differential equation has an implicit solution which can be written as\footnote{Computations done with Maple 2022, licensed for Valladolid University.}
$$
G(y,z)= \frac{aAi \left( N\dfrac{y-2}{yz^2} \right) + bz Ai ' \left( N\dfrac{y-2}{yz^2} \right)}{a Bi \left( N\dfrac{y-2}{yz^2} \right) + b z Bi' \left( N\dfrac{y-2}{yz^2} \right)} ,
$$
where $Ai (x)$, $Bi (x)$ is a basis of (entire functions) solutions of the Airy differential equation $y'' (x) - x y(x)=0$. We will take $Ai (x)$, $Bi (x)$ as the usual basis (see \cite{Olver} for details). In previous expression, $N= \dfrac{1}{2} \left(  \dfrac{3}{2} \right)^{1/3}$, $a= - \left( \dfrac{3}{2} \right)^{2/3}\cdot b$. 

$G(y,z)$ is a first integral for $\omega$ defined if $z\neq 0$, $y\neq 0$. In particular, it is defined over a neighborhood of $p_2$ minus the divisor $z=0$. Let us comment on its behavior when approaching $z=0$. The asymptotics of $Ai (x)$, $Bi (x)$ have been well studied when $x$ tends to infinity, which corresponds to $z$ tending to $0$ in the expression for $G(y,z)$. With notations as in \cite{Olver}, $G(y,z)$ is asymptotic to

\footnotesize
\begin{equation*}
\frac{e^{-2 \xi (y,z)}}{2} \cdot 
 \frac{aN^{-1/4} \eta (y)^{-1/4} \sum_s u_s \eta (y)^{-3s/2} (-2\sqrt{3} z^3)^{s} - bN^{1/4} \eta(y)^{1/4} \sum_s v_s \eta (y)^{-3s/2} (-2 \sqrt{3} z^3)^{s}}{aN^{-1/4} \eta (y)^{-1/4} \sum_s u_s \eta (y)^{-3s/2} (2\sqrt{3} z^3)^{s} + bN^{1/4} \eta(y)^{1/4} \sum_s v_s \eta (y)^{-3s/2} (2 \sqrt{3} z^3)^{s}},
\end{equation*} \normalsize 
where we have denoted $\eta (y)=\dfrac{y-2}{y}$, $u_s= \dfrac{(2s+1)(2s+3) \cdots (6s-1)}{2^{3s} 3^{3s} s!}$, and $v_s= - \dfrac{6s+1}{6s-1} u_s$, for $s\geq 1$, and $\xi (y,z)= \dfrac{1}{2\sqrt{3}} \eta (y)^{3/2} \dfrac{1}{z^3}$.

In a neighbourhood of $y=-1$, the expression concerning the asymptotics of $G(y,z)$ may be written as
$$
e^{-\xi (y,z)} \left[ \sum_{s\geq 0} \alpha_s \eta (y)^{-3s /2} z^{3s} + \eta (y)^{-1/2} \sum_{s\geq 0} \beta_s \eta(y)^{-3s/2} z^{3s} \right],
$$
for certain constants $\alpha_s$, $\beta_s$. In a neighborhood of $y_0= -1 + \varepsilon$ ($\varepsilon$ small), $z=0$, the foliation is regular, so this first integral must be convergent in $z$. There are positive constants $M$, $R$ such that
$$
\norm{\alpha_s } \cdot \norm{ \frac{3-\varepsilon}{1-\varepsilon} }^{-3s/2} \leq \frac{M}{R^{3s}},\qquad \norm{\beta_s } \cdot \norm{ \frac{3-\varepsilon}{1-\varepsilon} }^{-3s/2} \leq \frac{M}{R^{3s}}.
$$
From these bounds, convergence is also guaranteed in a neighborhood of $y_0=-1$, $z=0$ ($\varepsilon=0$). As this is an analytic expression around the singularity $p_2$, it turns out that the weak separatrix, which is a level for $F(y,z)$, actually converges. This argument may be carried out in the general case, but it is technically much more complicated and we shall not attempt to do it here.

\subsection{Stratum $S_{12}$} It consists on the family
\begin{multline*}
z((\alpha_1 y - \beta_1 z) (\alpha_2 y-\beta_2 z) + P_3 (y,z))dy \\ + (\alpha (\alpha_1 y - \beta_1 z)^2 (\alpha_2 y-\beta_2 z) y- y ((\alpha_1 y - \beta_1 z) (\alpha_2 y-\beta_2 z) + P_3 (y,z)) )dz,
\end{multline*}
where $P_3 (y,z)$ is a homogeneous polynomial of degree 3, $P_3 (y,0)=y^3$, and in order to have the origin as the unique isolated singularity, we must impose that $\alpha_1 \alpha_2 (\alpha \alpha_1-1)=0$, $\beta_1\beta_2 \neq 0$, and $P_3 (\beta_1,\alpha_1) P_3 (\beta_2,\alpha_2) \neq 0$.

The reduction of the singularities depends largely on the precise values of the parameters, but some general comments may be done. After blowing up the origin, a component $E_1$ appears with two singular points on it: $p_1$ and $p_2$. Blowing up $p_1$, a new divisor $E_2$ is obtained, with three singular points: the corner $p_3=E_1\cap E_2$, and two other points $p_4$, $p_5$. One of them (say, $p_4$) is reduced, resonant, with Camacho-Sad index $-1$. The other points ($p_3$ and $p_5$) have non-zero linear parts, with opposite Camacho-Sad indices. 	If these indices are not rational, they are reduced singularities. If they are rational, one of them is reduced, and the other needs still to be desingularized, arriving either to a dicritical situation or to a Poincar\'{e}-Dulac type singularity. In any case, it is not a second type foliation, and no divergent separatrices appear.

A similar situation is obtained during the reduction of singularities of $p_2$, and we shall not detail it here.
\subsection{Stratum $S_{15}$} It is, in local coordinates, the family 
$$
z(z+a_1 yz+a_0 z^2+P(y,z))dy + (bz^3-y(z+a_1 yz+a_0 z^2+P(y,z))dz,
$$
where $b\neq 0$, and $P$ is a homogeneous polynomial of degree 3, with nonzero term in $y^3$, which we can assume that it is 1. It is a dicritical foliation, the dicritical component of the divisor appears in the first blow-up.

%
%
%
%
%
%
%
%
%
%
%

\subsection*{Acknowledgments} The first author wishes to thank the hospitality of the Universidad de Valladolid during the stay in which much of this work was carried out.
The first author also wants to thank Dominique Cerveau for helpful conversations which helped to improve this work.

The second author wants to thank the Universidad de Guanajuato for the hospitality and support. He also wants to thank the city of Cuévano for the amazing ambiance while preparing this work. And last, but not least, the second author wishes to thank warmly the first author for being there, and for so many wonderful moments spent together, both physically and online. Claudia, I will never be grateful enough.

 Both authors want to thank the referees for their useful comments.

\end{document}